\def\arraystretch{1}
\title{Dominance-based linear formulation for the Anchor-Robust Project Scheduling Problem}
\author[1,2]{Pascale Bendotti}
\author[2]{Philippe Chr\'etienne}
\author[2]{Pierre Fouilhoux}
\author[1,2]{Ad\`ele Pass-Lanneau}
\affil[1]{ 
\normalsize
EDF R\&D, F-91120 Palaiseau, France\\
\hspace{5cm} \texttt{\{pascale.bendotti, adele.pass-lanneau\}@edf.fr}}
\affil[2]{ Sorbonne Universit\'e, CNRS, LIP6, F-75005 Paris, France\\
\hspace{5cm} \texttt{\{philippe.chretienne, pierre.fouilhoux\}@lip6.fr}}
\date{February 13, 2021}
\begin{document}

\maketitle

\begin{abstract}
In project scheduling under processing times uncertainty, the Anchor-Robust Project Scheduling Problem is to find a baseline schedule of bounded makespan and a max-weight  subset of jobs whose starting times are guaranteed.
The problem was proven NP-hard even for budgeted uncertainty.
In the present work we design mixed-integer programming (MIP) formulations 
that are valid for a variety of uncertainty sets encompassing budgeted uncertainty.
A new dominance among solutions is proposed, resulting into an MIP formulation.
We further study the combinatorial structure of the problem. 
Non-trivial polynomial cases under budgeted uncertainty are exhibited, where the dominance-based formulation yields a polyhedral characterization of integer solutions. In more general cases, the dominance-based formulation is shown to be tighter than all previously known formulations.
In numerical experiments we investigate how the formulation performs on instances around the polynomial cases, for both budgeted uncertainty sets and more elaborate uncertainty sets involving several budgets.

\emph{Keywords:} project scheduling; combinatorial optimization; mixed-integer programming; robust 2-stage optimization; polyhedral characterization.
\end{abstract}

\section{Introduction}

Consider a set of $n$ jobs $\Jobs$ to be scheduled under precedence constraints.
The \emph{precedence graph} is a directed acyclic graph $G(p)=(\AllJobs, \Arcset, p)$, where $\AllJobs = \Jobs \cup \{s,t\}$ and $s$ (resp. $t$) is a dummy job representing the beginning (resp. end) of the project.
Processing times of jobs are $p \in \RealJp$ and every arc $(i,j) \in \Arcset$ has length $p_i$, with $p_s=0$.
A schedule of $G(p)$ is a vector $x \in \Real_+^{\AllJobs}$ of starting times such that $x_s = 0$ and $x_j - x_i \geq p_i$ for every arc $(i,j)$ of $G(p)$.
Finding a schedule with minimum \emph{makespan} $x_t = \min_{i \in J} x_i + p_i$ is a classical polynomial-time solvable problem \citep{Pinedo}.

In practice processing times may be uncertain parameters, which calls for robust approaches. We consider that jobs have nominal processing times $p$ and the real processing times may be $p+\delta$, where $\delta$ lies in an uncertainty set $\Delta \subset \RealJp$.
For a given uncertainty set, different robust approaches can be considered.
A first approach from the \emph{static-robust} framework \citep{Soyster} is to find a schedule $x \in \Real_+^{\AllJobs}$ such that $x$ is a schedule of $G(p\!+\!\delta)$ for every $\dinD$,
and so that the makespan of $x$ is minimized.
The static-robust approach is known to be overly conservative, i.e., it produces a schedule with a very large makespan.
An alternate approach from \emph{adjustable-robust optimization} \citep{BGGNAdjustable}, 
is to find the minimum value $B$ such that for every $\dinD$ there exists a schedule  $x^{\delta} \in \Real_+^{\AllJobs}$ of $G(p\!+\!\delta)$ such that $\xd_t \leq B$.
The adjustable-robust approach is appealing since it often gives a worst-case makespan that is significantly lower than the static-robust makespan.
It was studied in \citet{MinouxEncyclo} for project scheduling. A major drawback is that the schedule depends on the uncertainty realization. In other words, starting times cannot be decided before $\delta$ is known.

Project scheduling has a large variety of practical applications such as planning industrial activities. The durations of activities are often not known exactly or they can change over time: this calls for the study of project scheduling under processing times uncertainty. It is often necessary to compute a schedule in advance, called \emph{baseline schedule} \citep{HerroelenLeusProjectSchedulingUncertainty}, while taking into account processing times uncertainty. The precomputation of a baseline schedule is common practice, especially when scheduling activities requires preparation and coordination with other entities to
secure the availability of staff or specific equipment. 
The makespan of the baseline schedule is a major criterion; in some cases, the project must be scheduled so that it completes before a deadline.
Moreover if processing times are disrupted and the schedule must be revised, it is important to stick to the baseline schedule \citep{HerroelenStablePreschedule}. As some activities may be difficult to reschedule, it is interesting to guarantee their starting times in the baseline schedule against uncertainty realizations.
\medbreak
\emph{The Anchor-Robust Project Scheduling Problem.} A new robust problem was introduced in \citet{AnchRobPSPHal}.
Before the real processing time of jobs is known, the problem is to decide which solution $x$ of $G(p)$ to choose as a baseline schedule with a bounded makespan. The key idea is to define an anchored set of jobs as a subset of jobs whose starting times could remain the same for any realization in the uncertainty set. 
Formally, given a schedule $x$ of $G(p)$ and uncertainty set $\Delta$, a subset of jobs $H \subseteq \Jobs$ is \emph{$x$-anchored} if for every $\dinD$ there exists $x^{\delta}$ schedule of $G(p\!+\!\delta)$ such that $x^{\delta}_i = x_i$ for every $i \in H$.
Given a deadline for the project and an anchoring weight associated to each job, the Anchor-Robust Project Scheduling Problem (AnchRob) is then to find a baseline schedule $x$ satisfying the deadline, and a subset of jobs $H$ that is $x$-anchored, so as to maximize the total anchoring weight of $H$.

Connections with classical robust approaches are as follows.
If there exists a solution $(x,H)$ of (AnchRob) with $H = J$, i.e., a solution where all jobs are anchored, then $x$ is a solution of the static-robust problem. If no such solution exists, it means that no static-robust solution respects deadline $M$. (AnchRob) is a so-called \emph{robust 2-stage} optimization problem, where  $x$ and $H$ are first-stage decisions and $x^{\delta}$ are second-stage decisions. Schedule $x$ is thus a baseline schedule which may be adjusted in second stage by revising starting times of non-anchored jobs only.

Let us present several uncertainty sets of interest for (AnchRob). This includes \emph{budgeted uncertainty sets} \citep{PriceBS} where at most $\Gamma$ processing times deviate from the nominal values $p$.
A special case is \emph{box uncertainty sets}, in which every processing time deviates from its nominal value within an interval, corresponding to uncertainty budget $\Gamma = n$. 
Another special case, referred to as \emph{1-disruption uncertainty sets}, is also considered. It corresponds to the situation where one deviation of fixed length occurs to any job in the project (i.e., one job, whichever it could be).
We also consider more elaborate uncertainty sets, corresponding to unions or intersections of budgeted uncertainty sets. Such uncertainty sets account, e.g., for the case of several uncertainty budgets on subsets of jobs.

\medbreak
Robust 2-stage problems are often considered challenging and computationally intensive \citep{BuchheimKurtzSurveyEJCO,surveyGabMurThi}.
A compact MIP formulation for robust 2-stage problems is often not known, and proposed exact approaches rely on decomposition methods, see, e.g., \citep{Costa,AyoubPossDecomposition,ZZ}.
Another trend is to consider approximations. With \emph{decision rules} introduced in \citet{BGGNAdjustable} it is assumed that second-stage variables have a fixed dependency (e.g., affine) on the uncertainty realization. It yields tractable approximations of the original robust 2-stage problem.
With \emph{k-adaptability} introduced by \citet{BertsimasCaramanisFiniteAdaptability} a fixed number of recourse solutions is determined in first stage and the recourse problem is to choose the best among them.
The focus of the present work is to investigate exact approaches for the (AnchRob) problem, and especially compact formulations, which -- to the best of our knowledge -- have been scarcely studied for robust 2-stage problems.

Robust 2-stage problems including a baseline solution in first stage and a feature to keep decisions unchanged were considered under the name of \emph{recoverable robustness} \citep{recoverableLiebchen,RecoverableTimetables} 
or \emph{robust optimization with incremental recourse} \citep{OrlinIncremental}.
When decisions are represented by continuous variables, different measures can be used to account for the stability of decisions. For project scheduling, the expected absolute gap between starting times was used as stability measure in \citet{HerroelenStablePreschedule}.
The anchoring criterion was defined as the number of identical starting times in \citet{CPMEJOR}. 
This criterion was studied for rescheduling problems in \citet{ISCOAnchResched}. 
The (AnchRob) problem was introduced in \citet{AnchRobPSPHal}, and it was proven NP-hard even for budgeted uncertainty. For budgeted uncertainty, a compact MIP reformulation was obtained, denoted by (Lay). It is based on a so-called layered graph and a dedicated analysis of the problem in this special case.
This MIP formulation is inherent to budgeted uncertainty, and thus not applicable to other uncertainty sets.

\medbreak
In this work, we investigate linear formulations for (AnchRob) that are valid for a variety of uncertainty sets encompassing budgeted uncertainty.
The starting point is to precompute the worst-case value over $\Delta$ of the longest \ijpath path for every pair of jobs $i,j$, and to use these values in a linear formulation.
We exhibit a dominance property among schedules and derive a linear formulation from it.
This dominance-based linear formulation, called (Dom), improves over a naive linear formulation.
In the case of budgeted uncertainty, (Dom) is compared with known formulation (Lay).
A polyhedral study is carried out to highlight how formulation (Dom) captures the combinatorial structure of the problem.
We prove that (Dom) yields a complete polyhedral characterization in two special cases: box uncertainty, and 1-disruption uncertainty for precedence graphs where all \stpath paths are longest \stpath paths.
We thus obtain that the latter is a polynomial case, while (AnchRob) under budgeted uncertainty is NP-hard in general.
Finally numerical experiments give evidence that (Dom) performs well for budgeted uncertainty, even for instances that do not match the polyhedral characterization cases. (Dom) is also capable of solving the problem for uncertainty sets where no MIP formulation was previously investigated, e.g., in the case of several budgets.
\medbreak

In Section~\ref{sec:prelim} notation and preliminaries on problem (AnchRob) are given.
In Section~\ref{sec:formulation} the dominance-based linear formulation is presented, and {compared with} other known formulations.
In Section~\ref{sec:polyhedralCharac} polyhedral characterizations are proven for box uncertainty, and 1-disruption uncertainty.
In Section~\ref{sec:num} numerical results are presented.

\section{Preliminaries}
\label{sec:prelim}

\subsection{General notation}

Let $G(p)=(\AllJobs, \Arcset, p)$ be the precedence graph.
It is assumed that there is an arc $(s,i)$ (resp. an arc $(i,t)$) for every job $i \in J$ without predecessor (resp. without successor) in $J$.
Let $\prec$ denote the partial order on $\AllJobs$ defined by $i \prec j$ if there exists an \ijpath path in $G$.
Given $i,j \in \AllJobs$, $i \prec j$, let $\LGpij$ be the length of the longest \ijpath path in $G(p)$. 
In the sequel we will use the shorthand notation $\Lzij = \LGpij$.
The minimum makespan of a schedule of $G(p)$ is then $\Lz_{st}$. 
The \emph{earliest schedule} defined by $x_i = \Lz_{si}$ for every $i \in \Jobs$ is a schedule with minimum makespan.
Let $\SetSchedules \subseteq \RealAllJp$ denote the set of schedules of $G(p)$ with makespan at most $M$.
A longest \stpath path is a \emph{critical path}. 
The precedence graph $G(p)$ is \emph{quasi-critical} if every job $i \in J$ belongs to a critical path, i.e., $\Lz_{si} + \Lz_{it} = \Lz_{st}$.
The precedence graph $G(p)$ is \emph{critical} if all \stpath 
paths in $G(p)$ are critical, i.e., they have length $\Lz_{st}$.
Note that if $G(p)$ is critical then it is quasi-critical.
The uncertainty set $\Delta$ is assumed to be a subset of $\RealJp$.
For uncertainty set $\Delta$, let us define the value $$\LDij = \max_{\dinD} \LGpdij$$ 
for every $i,j \in \AllJobs$, $i \prec j$. Then $\LDij$ is the \emph{worst-case longest path value} between job $i$ and job $j$.

\subsection{The Anchor-Robust Project Scheduling Problem}

Let us now give a formal definition of the Anchor-Robust Project Scheduling Problem (AnchRob). An instance of the problem is described by the parameters $(G(p),M,$ $\Delta,w)$ where: $G(p)=(\AllJobs, \Arcset, p)$ is the precedence graph, $M \geq 0$ is a deadline on the project, $\Delta\subset \RealJp$ is the uncertainty set, and $w \in \RealJp$ is a vector of non-negative anchoring weights associated with jobs.

A solution of the problem is a pair $(x,H)$ where $x$ is a schedule of $G(p)$ with makespan $x_t$ at most $M$, and $H$ is a subset of $J$ that is $x$-anchored, that is: for every realization $\delta$ in $\Delta$, there exists a schedule $\xdelta$ of $G(p+\delta)$ satisfying $\xdelta_i=x_i$ for $i\in H$.
The objective is to find a solution $(x,H)$ with maximum total anchoring weight $\sum_{i \in H} w_i$.

\medbreak

The Anchor-Robust Project Scheduling Problem is related to robust 2-stage optimization. First-stage decisions are the baseline schedule $x$, and the anchored set $H$. Second-stage decisions, i.e., decisions that depend on the uncertainty realization, are the schedule $\xdelta$ involved in the definition of an anchored set. The problem can be rewritten under the following form:\\
\begin{center}
\def\arraystretch{1}
\begin{tabularx}{\textwidth}{l r l X}
(AnchRob): &    $\max$ & $\sum_{i\in H} \aw_i$  \\
&     s.t. &  $x$ schedule of $G(p)$ \\ 
&          & $x_t \leq M$ \\

&          & $H \subseteq \Jobs$: $\forall \dinD$, $\exists\ x^{\delta}$ schedule of $G(p\!+\!\delta)$\\
&          & \hspace{2.7cm} s.t. $x_i = x^{\delta}_i$ $\forall i\!\in\!H$
& \hfill\\
\end{tabularx}
\end{center}
Note that the proposed form is related to max/min/max problems arising usually in robust 2-stage optimization.
Given a schedule $x$ of $G(p)$, a subset $H \subseteq \Jobs$, and an uncertainty realization $\dinD$, the second-stage or recourse problem of (AnchRob) is to decide the existence of a schedule $\xdelta$ of $G(p+\delta)$ with $x_i = \xdelta_i$ for every $i \in H$.
\medbreak
Let us finally illustrate (AnchRob) on a simple example.
Consider a project scheduling instance with 5 jobs. The precedence graph is represented in Figure~\ref{fig:precGraph}. Nominal processing times are $p=(1,1,1,1,2)$. 
Each job is also associated with a worst-case deviation $\dhat = (0.5, 1, 0.5, 0.5, 0.5)$. In Figure~\ref{fig:precGraph}, each arc $(i,j)$ is weighted with value $p_i+\dhat_i$.
Let the deadline be $M = 4.5$.

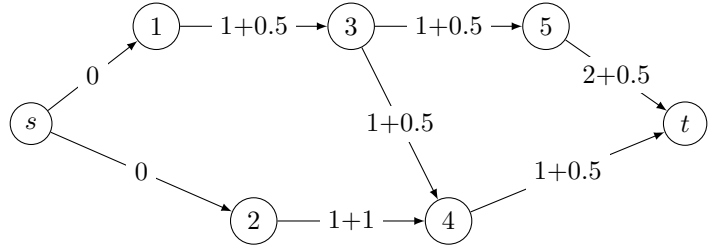
\begin{figure}[H]
    \centering
    \begin{tikzpicture}[scale=1.85, >=latex]

\tikzset{edgelabel/.style={fill=white, inner sep = 3pt, midway}}

\node[circle,draw] (s) at (-0.9,  0) {$s$};
\node[circle,draw] (1) at (0,   0.7) {1};
\node[circle,draw] (2) at (0.5*1.4,  -0.7) {2};
\node[circle,draw] (3) at (1*1.4,   0.7) {3};
\node[circle,draw] (4) at (1.5*1.4,-0.7) {4};
\node[circle,draw] (5) at (2*1.4,   0.7) {5};
\node[circle,draw] (t) at (1+2*1.4,   0) {$t$};

\draw[->] (s) -- (1)node[edgelabel] {0};
\draw[->] (s) -- (2)node[edgelabel] {0};
\draw[->] (1) -- (3)node[edgelabel] {1+0.5};
\draw[->] (3) -- (5)node[edgelabel] {1+0.5};
\draw[->] (2) -- (4)node[edgelabel] {1+1};
\draw[->] (3) -- (4)node[edgelabel] {1+0.5};
\draw[->] (5) -- (t)node[edgelabel] {2+0.5};
\draw[->] (4) -- (t)node[edgelabel] {1+0.5};

\end{tikzpicture}
    \caption{Precedence graph for an instance with $5$ jobs, with arc-weights $p+\dhat$.}
    \label{fig:precGraph}
\end{figure}

Two different uncertainty sets will be considered, to give a flavor of their respective impact on solutions of (AnchRob).

A first uncertainty set is the box $\Delta = \Pi_{i \in J} [0, \dhat_i]$. Then every job $i$ may have any duration in range $[p_i,\ p_i\!+\!\dhat_i]$.
Consider a schedule for processing times $p+\dhat$. If it has a makespan larger than $M$, such a schedule is infeasible for (AnchRob).
Hence (AnchRob) is different from the static-robust case.
Consider the schedule $x = (0, 1, 1, 3, 2.5)$ and set $H = \{1,2,4\}$.
Figure~\ref{fig:scheduleBox} shows schedule $x$. Each job is represented by a rectangle of length $p_i$, and deviation $\dhat_i$ is represented with dotted rectangle. Jobs from $H$ are represented in dark gray.
 Then $(x,H)$ is a solution of (AnchRob) for box $\Delta$. First $x$ has makespan $M=4.5$. Also the set $H$ is $x$-anchored, since for any value of jobs durations, it is possible to repair the schedule by moving jobs 3 and 5 only (in this case, just by right-shifting).

\begin{figure}[H]
    \centering
    \begin{tikzpicture}[>=latex, xscale=1.2]

\tikzstyle{jobstyle}=[thick]
\tikzstyle{delaystyle}=[thick,  pattern=north east lines, dotted]
\tikzstyle{anchored}=[fill=black!40]
\tikzstyle{nonanchored}=[fill=black!10]

\draw[->] (-0.2, -2.5) -- (5.5, -2.5);
\foreach \grad in {0, 1, ..., 5} 
{ \draw[dotted] (\grad ,-2.5) -- (\grad, 0.5) ;}
\draw[] (0,-2.5) -- (0, 0.5);
\node[] (zero) at (0, -2.7) {0};

\draw[thick] (4.5,-2.5) -- (4.5, 0.5);
\node[] (M) at (4.5, -2.7) {$M$};

\draw[anchored, jobstyle] (0,0.1) rectangle (1,-0.2);
\node[] (label1) at (0.2,-0.05) {\small{1}};
\draw[anchored, delaystyle](1,0.1) rectangle (1+0.5,-0.2);
\draw[nonanchored, jobstyle] (1,-0.4) rectangle (2,-0.7);
\node[] (label1) at (1+0.2,-0.55) {\small{3}};
\draw[nonanchored, delaystyle](2,-0.4) rectangle (2+0.5,-0.7);
\draw[nonanchored, jobstyle] (2.5,-0.9) rectangle (4.5,-1.2);
\node[] (label1) at (2.5+0.2,-1.05) {\small{5}};
\draw[nonanchored, delaystyle](4.5,-0.9) rectangle (4.5+0.5,-1.2);
\draw[anchored, jobstyle] (1,-1.4) rectangle (2,-1.7);
\node[] (label1) at (1+0.2,-1.55) {\small{2}};
\draw[anchored, delaystyle](2,-1.4) rectangle (2+1,-1.7);
\draw[anchored, jobstyle] (3,-1.9) rectangle (4,-2.2);
\node[] (label1) at (3+0.2,-2.05) {\small{4}};
\draw[anchored, delaystyle](4,-1.9) rectangle (4+0.5,-2.2);

\end{tikzpicture}
    \caption{Schedule $x = (0, 1, 1, 3, 2.5)$ and set $H = \{1,2,4\}$ in dark gray, $x$-anchored for box uncertainty set $\Delta = \Pi_{i \in J} [0, \dhat_i]$.}
    \label{fig:scheduleBox}
\end{figure}
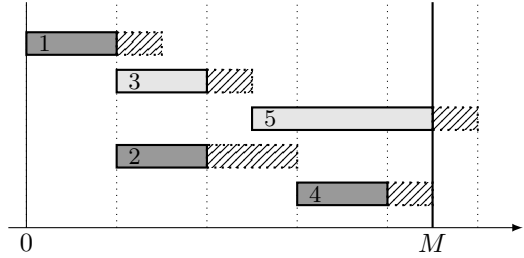

A second uncertainty set is 
 $\Delta = \{ (\dhat_i u_i)_{i \in \Jobs}\colon\ u \in \BinJobs,\ \sum_{i \in \Jobs} u_i \leq 1\}$. It corresponds to an uncertainty budget $\Gamma=1$, i.e., at most one processing time deviates. Consider the same schedule $x$, and the set $H' = \{1, 2, 4, 5\}$, represented in Figure~\ref{fig:scheduleBudgeted}. Set $H'$ is $x$-anchored for budgeted uncertainty set $\Delta$: indeed for any value of $\delta \in \Delta$, the schedule can be repaired by moving only job~3.

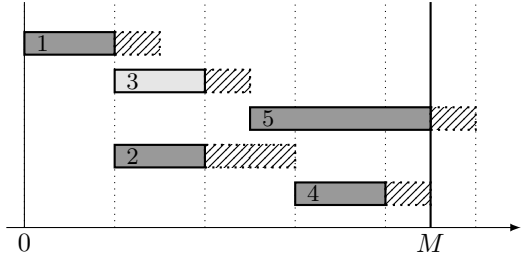
\begin{figure}[H]
    \centering
    \begin{tikzpicture}[>=latex, xscale=1.2]

\tikzstyle{jobstyle}=[thick]
\tikzstyle{delaystyle}=[thick,  pattern=north east lines, dotted]
\tikzstyle{anchored}=[fill=black!40]
\tikzstyle{nonanchored}=[fill=black!10]

\draw[->] (-0.2, -2.5) -- (5.5, -2.5);
\foreach \grad in {0, 1, ..., 5} 
{ \draw[dotted] (\grad ,-2.5) -- (\grad, 0.5) ;}
\draw[] (0,-2.5) -- (0, 0.5);
\node[] (zero) at (0, -2.7) {0};

\draw[thick] (4.5,-2.5) -- (4.5, 0.5);
\node[] (M) at (4.5, -2.7) {$M$};

\draw[anchored, jobstyle] (0,0.1) rectangle (1,-0.2);
\node[] (label1) at (0.2,-0.05) {\small{1}};
\draw[anchored, delaystyle](1,0.1) rectangle (1+0.5,-0.2);
\draw[nonanchored, jobstyle] (1,-0.4) rectangle (2,-0.7);
\node[] (label1) at (1+0.2,-0.55) {\small{3}};
\draw[nonanchored, delaystyle](2,-0.4) rectangle (2+0.5,-0.7);
\draw[anchored, jobstyle] (2.5,-0.9) rectangle (4.5,-1.2);
\node[] (label1) at (2.5+0.2,-1.05) {\small{5}};
\draw[anchored, delaystyle](4.5,-0.9) rectangle (4.5+0.5,-1.2);
\draw[anchored, jobstyle] (1,-1.4) rectangle (2,-1.7);
\node[] (label1) at (1+0.2,-1.55) {\small{2}};
\draw[anchored, delaystyle](2,-1.4) rectangle (2+1,-1.7);
\draw[anchored, jobstyle] (3,-1.9) rectangle (4,-2.2);
\node[] (label1) at (3+0.2,-2.05) {\small{4}};
\draw[anchored, delaystyle](4,-1.9) rectangle (4+0.5,-2.2);

\end{tikzpicture}
    \caption{Schedule $x = (0, 1, 1, 3, 2.5)$ and set $H' = \{1,2,4,5\}$ in dark gray, $x$-anchored for budgeted uncertainty set $\Delta = \{ (\dhat_i u_i)_{i \in \Jobs}: u \in \BinJobs,\ \sum_{i \in \Jobs} u_i \leq 1\}$.}
    \label{fig:scheduleBudgeted}
\end{figure}

\subsection{Uncertainty sets}
\label{sec:uncSets}

Note first that some assumptions on $\Delta$ can be made without loss of generality.
 The uncertainty set can be assumed to be convex \citep{BGGNAdjustable}.
 The uncertainty set can also be assumed to be down-monotone, i.e., if $\delta \in \Delta$ and $\delta' \leq \delta$ then $\delta' \in \Delta$. Indeed it directly follows from the fact that if $x^{\delta}$ is a schedule of $G(p+\delta)$, then it is a schedule of $G(p+\delta')$ for every $\delta' \leq \delta$.
In the present work, considered uncertainty sets will be polyhedra, and w.l.o.g. down-monotone.

\medbreak
Let us now present some uncertainty sets of interest.

Set $\Delta$ is a \emph{box uncertainty set} if $\Delta = \{ (\delta_i)_{i \in J}\ :\ 0 \leq \delta_i \leq \dhat_i\ \forall i \in J\}$ with $\dhat \in \RealJp$, i.e., it is a cartesian product of intervals. Then $\dhat$ is a greatest element of $\Delta$ in the sense that $\delta \leq \dhat$ for every $\dinD$. Note that if $\Delta$ is any set with greatest element $\dhat$, then w.l.o.g. it can be assumed to be down-monotone, and thus equal to the box with greatest element $\dhat$.

Set $\Delta$ is a \emph{budgeted uncertainty set} if $\Delta = \{ (\dhat_i u_i)_{i \in \Jobs}: u \in \CubeJ,\ \sum_{i \in \Jobs} u_i \leq \Gamma\}$, with \emph{deviation} $\dhat \in \RealJp$ and \emph{uncertainty budget} $\Gamma \in \{1, \dots, |J| \}$. The extreme points of such $\Delta$ correspond to budgeted uncertainty as defined in the seminal work of \citep{PriceBS}, where $\Gamma$ is the number of processing times that may deviate from their nominal value.
Box uncertainty is the special case of budgeted uncertainty where $\Gamma = |\Jobs|$.

Set $\Delta$ is a \emph{1-disruption uncertainty set} if it is a budgeted uncertainty set with unit budget $\Gamma = 1$ and uniform deviation, i.e., $\dhat_i = \dhat_0$ for every $i \in J$. Extreme points of a 1-disruption uncertainty set represent the situation where one event of fixed -- possibly large -- deviation $\dhat_0$ may happen anywhere in the project.
Then the processing time of one job is increased by fixed amount $\dhat_0$.

\medbreak
Let us now present more elaborate uncertainty sets, built as unions or intersections of budgeted uncertainty sets.

Set $\Delta$ is a \emph{partition-budgeted uncertainty set} if $\Delta = \{ (\dhat_i u_i)_{i \in \Jobs}: u \in \CubeJ,$ $\sum_{i \in J^k} u_i \leq \Gamma^k\ \forall k \in \{ 1, \dots, m\}\}$ where $(J^1, ..., J^{m})$ is a partition of $J$ and $\Gamma^k \in \{1, \dots, |J^k|\}$ for every $k \in \{ 1, \dots, m\}$. Each group $J^k$ is associated with its own uncertainty budget $\Gamma^k$. It holds that $\Delta = \bigcap_{1 \leq k \leq m} \Delta^k$ where $\Delta^k = \{ (\dhat_i u_i)_{i \in \Jobs}: u \in \CubeJ,\ \sum_{i \in J^k} u_i \leq \Gamma^k \}$ for every $k \in \{1, \dots, m\}$. Distinct uncertainty budgets on disjoint subsets of the partition are relevant when deviations of jobs from different subsets are uncorrelated. A special case of interest is to consider a partition $(J^1, J^2)$ where jobs of the first subset $J^1$ 
are associated with small deviations but large uncertainty budget $\Gamma^1$; jobs of the second subset $J^2$ are associated with large deviations but small uncertainty budget $\Gamma^2$.

Set $\Delta$ is a \emph{mixed-budgeted uncertainty set} if $\Delta = \bigcup_{1 \leq k \leq m} \Delta^k $ where $\Delta^k$ are budgeted uncertainty sets. 
Consider the following special case, where $\Delta^1$ is defined by deviation $\dhat$ and budget $\Gamma^1$, and $\Delta^2$ is defined by deviation $\tau\dhat$ for a given $\tau \in [0, 1]$, and budget $\Gamma^2 > \Gamma^1$. Then $\Delta = \Delta^1 \cup \Delta^2$ supports two kinds of uncertainty realizations corresponding either to a large number of small deviations (i.e., $\delta \in \Delta^2$) or a small number of large deviations (i.e., $\delta \in \Delta^1$).
For uniform deviation, it holds that $\Delta^1 \subsetneq \Delta$ whenever $\tau \Gamma^2 > \Gamma^1$.

\medbreak
All these uncertainty sets are defined implicitly through inequalities or set operations.
Note that the uncertainty set may also be given explicitly as the convex hull of a discrete set of points.
Given a set $S$ of scenarii, for every $s \in S$ the uncertainty realization is some $\delta^s \in \RealJp$, and
$\Delta = \conv\{\delta^s, s \in S\}$.
Any implicitly-defined set $\Delta$ can be written under that form by enumerating its extreme points, but then the input size is exponentially increased.

\subsection{Assumption on longest paths computation}

Computing worst-case longest path values $\LDij$ can be an NP-hard problem for polyhedral set $\Delta$ defined by inequalities \citep{AnchRobPSPHal}.
 
However, efficient algorithms can be designed for some uncertainty sets.
For box uncertainty, the computation is straightforward since $\LDij = L_{G(p+\dhat)}(i,j)$.
For budgeted uncertainty sets, and thus 1-disruption uncertainty sets, the worst-case longest path values $\LDij$ can be computed in polynomial time by dynamic programming \citep{MinouxEncyclo}. The algorithm is linear in $\Gamma |\Arcset|$.
For partition-budgeted uncertainty sets, the $\LDij$ values can be computed using a straightforward generalization of the dynamic programming of \citep{MinouxEncyclo} which is linear in $(\Pi_{1 \leq k \leq m} \Gamma^k) |\Arcset|$.
For mixed-budgeted uncertainty sets, the $\LDij$ values can be easily obtained since $\LDij = \max_{1 \leq k \leq m} L^{\Delta^k}_{ij} $ for every $i,j \in \AllJobs$.
For explicitly-defined uncertainty sets, values $\LDij$ can be computed in polynomial time in the number of scenarii $|S|$, since $\LDij = \max_{s \in S} L_{G(p+\delta^s)}(i,j)$. This relies on the property that $\LDij = L^{\conv(\Delta)}_{ij}$ for any $\Delta$, as proven in \citet{AnchRobPSPHal}.

\bigbreak
In the sequel, the main assumption is that the $\LDij$ values have been precomputed.
Thus they will appear as coefficients of constraints in the proposed mixed-integer programming formulations.

\subsection{Anchored sets}

Let us finally give some preliminaries related to linear formulations for (AnchRob). Recall that a solution of (AnchRob) is a pair $(x,H)$ with a schedule $x$ and a subset $H$ of jobs that is $x$-anchored.
Fix $x$ a schedule of $G(p)$.
It was shown in \citet{AnchRobPSPHal} that a set $H$ is $x$-anchored if and only if $x_j - x_i \geq \LDij$ for every $i \in H \cup \{s\}$, $j \in H$, $i \prec j$.
Let $\GtildeH$ denote the precedence graph obtained from $G(p)$ by introducing additional arcs $(i,j)$, $i \in H \cup \{s\}$, $j \in H$, $i \prec j$ with arc-lengths $\LDij$. Then 

\begin{proposition}[\cite{AnchRobPSPHal}]
\label{prop:caracAnchored}
Let $H \subseteq J$.
Vector $x$ is a schedule of $G(p)$ such that $H$ is $x$-anchored if and only if $x$ is a schedule of $\GtildeH$.
\end{proposition}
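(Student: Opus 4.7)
The plan is to prove the two implications separately, using the characterization from \citet{AnchRobPSPHal} that $H$ is $x$-anchored iff $x_j - x_i \geq \LDij$ for every $i \in H \cup \{s\}$, $j \in H$, $i \prec j$. Since $\GtildeH$ is built from $G(p)$ by adding precisely these arcs with lengths $\LDij$, the statement reduces to showing that the added arc-constraints are equivalent to $H$ being $x$-anchored.

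For the direct implication, I would assume that $x$ is a schedule of $G(p)$ and $H$ is $x$-anchored, and check every arc of $\GtildeH$. Arcs inherited from $G(p)$ are satisfied by assumption. For an added arc $(i,j)$ with $i \in H \cup \{s\}$, $j \in H$, $i \prec j$, I would fix $\delta \in \Delta$ achieving the maximum in the definition of $\LDij$ and invoke the $x$-anchored property to obtain a schedule $x^\delta$ of $G(p+\delta)$ with $x^\delta_k = x_k$ for $k \in H \cup \{s\}$. Since $x^\delta$ respects every arc of $G(p+\delta)$, the standard path argument gives $x^\delta_j - x^\delta_i \geq L_{G(p+\delta)}(i,j) = \LDij$, and the fixed values translate this to $x_j - x_i \geq \LDij$.

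For the reverse implication, the task is to build, for each $\delta \in \Delta$, an explicit schedule $x^\delta$ of $G(p+\delta)$ that coincides with $x$ on $H \cup \{s\}$. The natural construction is to set $x^\delta_i = x_i$ for $i \in H \cup \{s\}$, and for $j \notin H \cup \{s\}$ to define
\[
x^\delta_j = \max_{i \in H \cup \{s\},\, i \prec j} \bigl( x_i + L_{G(p+\delta)}(i,j) \bigr),
\]
with the convention that $s$ is always a predecessor of every $j \in \AllJobs \setminus \{s\}$, so the max is non-empty. Then I would verify the precedence constraint $x^\delta_j - x^\delta_i \geq p_i + \delta_i$ for each arc $(i,j) \in \Arcset$ by splitting on whether each endpoint lies in $H \cup \{s\}$: when both lie in $H \cup \{s\}$, the hypothesis $x_j - x_i \geq \LDij \geq L_{G(p+\delta)}(i,j) \geq p_i + \delta_i$ closes the case; when only $j$ is fixed, bound $x^\delta_i$ through an ancestor $i'' \in H \cup \{s\}$ and concatenate the $i''$-$i$ path with the arc $(i,j)$ to get an $i''$-$j$ path whose length is dominated by $\LDij \leq x_j - x_{i''}$; the remaining cases follow by appending the arc $(i,j)$ to the optimal $i''$-$i$ path realizing $x^\delta_i$.

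The main obstacle is the reverse direction, and specifically the consistency check in the case where $j \in H$ and $i \notin H \cup \{s\}$: one must argue that pinning $x^\delta_j$ to $x_j$ does not violate the arc $(i,j)$, even though $x^\delta_i$ may have been freely inflated by deviations on some $i''$-$i$ subpath. The key reason this works is exactly the concatenation argument together with the hypothesis $x_j - x_{i''} \geq L^\Delta_{i'' j}$, which dominates any $i''$-$j$ path in $G(p+\delta)$; this is where the inequalities defining $\GtildeH$ are used in full strength.
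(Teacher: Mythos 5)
Your proof is correct, but it does substantially more work than the paper does. The paper treats this proposition as an immediate corollary of a result it cites from \citet{AnchRobPSPHal}: namely that $H$ is $x$-anchored if and only if $x_j - x_i \geq \LDij$ for all $i \in H \cup \{s\}$, $j \in H$, $i \prec j$. Given that lemma, the proposition is a one-line observation, since $\GtildeH$ is by construction $G(p)$ augmented with exactly those inequalities as arcs. You state that you would use this cited characterization, but then you actually re-prove it from the definition of an anchored set: the forward direction by evaluating an anchored recourse schedule at a worst-case $\delta$ (which requires the maximum defining $\LDij$ to be attained --- harmless here since the paper's uncertainty sets are bounded polyhedra), and the reverse direction by the explicit recourse construction $x^\delta_j = \max_{i'' \in H \cup \{s\},\, i'' \prec j}\bigl(x_{i''} + L_{G(p+\delta)}(i'',j)\bigr)$ for unanchored $j$, with the case analysis on arcs of $G(p+\delta)$. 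Your identification of the delicate case --- an arc $(i,j)$ with $j \in H$ and $i \notin H \cup \{s\}$, resolved by concatenating the $i''$--$i$ path with the arc and dominating via $x_j - x_{i''} \geq L^{\Delta}_{i''j}$ --- is exactly where the full strength of the added constraints is needed, and the argument goes through. What your route buys is self-containedness: a reader gets a complete proof of the underlying characterization rather than a pointer to the earlier paper. What it costs is length and a redundancy in your write-up: the opening paragraph invokes the cited lemma, after which the rest would be unnecessary; you should either cite the lemma and stop, or drop the citation and present your argument as the proof of the characterization itself.
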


Let us now define anchored sets, independently from a given baseline schedule.
A subset $H \subseteq \Jobs$ is \emph{anchored} if there exists a schedule $x$ of $G(p)$ with makespan at most $M$ such that $H$ is $x$-anchored. 
An issue is then to retrieve a baseline schedule $x$ for which $H$ is $x$-anchored. Consider $z$ the earliest schedule of $\GtildeH$: then by Proposition~\ref{prop:caracAnchored}, if $H$ is an anchored set then $H$ is $z$-anchored.
Given $H \subseteq \Jobs$, let $\indic{H}$ denote the incidence vector of $H$.
Let $\Hrond = \{ \indic{H}:\ H \text{ anchored set}\}$, and $\Qrond = \conv(\Hrond)$.
Note that (AnchRob) reduces to finding a max-weight anchored set, i.e., maximizing $\sum_{i \in \Jobs} \aw_i h_i$ for $h \in \Hrond$, or equivalently, for $h \in \Qrond$.

Let us give some definitions related to mixed-integer programming formulations for (AnchRob).
Considered formulations for (AnchRob) involve binary anchoring variables $h \in \BinJobs$ to indicate if jobs are in the anchored set, and continuous variables, say $x \in \Real^q$.
A formulation for (AnchRob) is defined by a polyhedron $\mathcal{P} \subseteq \Real^q \times \CubeJ$ and integrality constraints $h \in \BinJobs$, so that the feasible set of the formulation is $\mathcal{P} \cap (\Real^q\!\times\!\BinJobs)$.
Given $\mathcal{F} \subseteq \Real^q \times \Real^J$, let $\Projh( \mathcal{F} ) = \{ h \in \Real^J:\ \exists (x,h) \in \mathcal{F}\}$ denote its projection on $h$ variables.
A formulation is \emph{valid for (AnchRob)} if $\Projh( \mathcal{P} ) \cap \BinJobs = \Hrond$. Given two polyhedra $\mathcal{P}_1$ and $\mathcal{P}_2$, formulation associated with $\mathcal{P}_1$ is \emph{stronger} than formulation associated with $\mathcal{P}_2$ if $\Projh(\mathcal{P}_1) \subseteq \Projh(\mathcal{P}_2)$.
A formulation \emph{yields a polyhedral characterization for (AnchRob)} if $\Projh(\mathcal{P}) = \Qrond$.
Importantly, if the formulation associated with polyhedron $\mathcal{P}$ yields a polyhedral characterization for a special case of (AnchRob), and $\mathcal{P}$ is described by a polynomial number of inequalities, then the special case of (AnchRob) is polynomial. Indeed (AnchRob) can be solved by the linear program $\max \sum_{i \in J} w_i h_i$ for $(x,h) \in \Prond$.

\section{Linear formulations for (AnchRob)}
\label{sec:formulation}

In this section, we establish linear formulations for (AnchRob) using $\LDij$ values as coefficients. In Section~\ref{sec:form:naive} a naive formulation is given as a benchmark. In Section~\ref{sec:form:domprop} the main dominance property is proven. In Section~\ref{sec:form:dom} a formulation is derived from the dominance property. In Section~\ref{sec:form:tighter} it is compared with other known formulations. 

\subsection{A naive formulation}
\label{sec:form:naive}

Consider schedule continuous variables $x_j$, $j \in \AllJobs$, and anchoring binary variables $h_j \in \{0,1\}$, $j \in \Jobs$. Vector $h$ is the incidence vector of the anchored set $H$. A formulation for (AnchRob) requires constraints to enforce that $H$ is an $x$-anchored set.
The characterization of Proposition~\ref{prop:caracAnchored} suggests a quadratic constraint
\begin{center}
\def\arraystretch{1}
\begin{tabularx}{\textwidth}{r l l X}
\hspace{1.4cm} & $x_j - x_i \geq \LDij h_i h_j$ & \hspace{1.2cm} $\forall i,j \in \AllJobs, i \prec j$ &  \hfill \myeqrefquad\\
\end{tabularx}
\end{center}
to represent precedence constraints of the graph $\GtildeH$.
Note that for dummy jobs $s$ and $t$ there is no $h_j$ decision variable but we set $h_s=1$ and $h_t = 0$ for the ease of notation.
For validity, it is sufficient to check that for each $i \prec j$, if $h_i = 0$ or $h_j = 0$ then inequality \myeqrefquad\ is valid. Indeed it reduces to $x_j - x_i \geq 0$ which holds for every $i \prec j$ for every schedule $x$ of $G(p)$.

\medbreak
Applying a standard linearization technique, a linear formulation (Std) can be obtained by replacing constraint \myeqrefquad\ by the following linear inequality \myeqrefstd
\begin{center}
\def\arraystretch{1}
\begin{tabularx}{\textwidth}{l r l l X}
(Std): &    $\max$ & $\sum_{i \in J} \aw_i h_i$  \\
&     s.t. & $x_j - x_i \geq p_i$ & $\forall (i,j) \in \Arcset$ \\ 
 &         & $x_t \leq M$ \\
 &         & $x_j - x_i \geq \LDij (h_i + h_j - 1)$ & $\forall i,j \in \AllJobs, i \prec j$ & \hfill \myeqrefstd \\
 &         & $x_j \geq 0$ & $\forall j \in \AllJobs$\\
&& $h_j \in \{0,1\}$ & $\forall j \in \Jobs$\\
\end{tabularx}
\end{center}
It is easy to check that when $h_i = 0$ or $h_j=0$, \myeqrefstd\ induces a valid constraint.

\subsection{A dominance property}
\label{sec:form:domprop}

Let $H \subseteq J$ be a subset of jobs. Let us define the set of all baseline schedules $x$ such that $(x,H)$ is feasible for (AnchRob) as $\SetSchedulesH = \{ x \in \SetSchedules:\ H $ $x$-anchored$\}$. 
Note that (AnchRob) problem is to maximize the weight of a set $H$ such that $\SetSchedulesH \not= \varnothing$.
By Proposition~\ref{prop:caracAnchored},
\begin{equation*}
    \SetSchedulesH = \{ x \in \SetSchedules :\hspace{0.3cm} x_j - x_i \geq \LDij\ 
    \hspace{0.3cm} \forall i \in H \cup \{s\}, j \in H, i \prec j\}.
\end{equation*}

Let us now define a set of baseline schedules where the same inequality is imposed, but on pairs $i,j$ with $i \in J \cup \{s\}$:
\begin{equation*}
\SetZSchedulesH = \{ z \in \SetSchedules :\hspace{0.3cm} z_j - z_i \geq \LDij\ \hspace{0.3cm} \forall i \in J \cup \{s\}, j \in H, i \prec j\}.
\end{equation*}

\begin{theorem}
\label{thm:dominance}
Set $\SetZSchedulesH$ is dominant, in the sense that $\SetZSchedulesH \subseteq \SetSchedulesH$, and $\SetSchedulesH\!\not=\!\varnothing$ implies $\SetZSchedulesH\!\not=\!\varnothing$.
\end{theorem}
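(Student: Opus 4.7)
The inclusion $\SetZSchedulesH \subseteq \SetSchedulesH$ is essentially free: since $H \subseteq J$, any $z$ satisfying $z_j - z_i \geq \LDij$ for all $i \in J \cup \{s\}$, $j \in H$ with $i \prec j$ in particular satisfies those inequalities for $i \in H \cup \{s\}$, so by Proposition~\ref{prop:caracAnchored} $H$ is $z$-anchored, hence $z \in \SetSchedulesH$. The substance of the theorem is therefore the implication $\SetSchedulesH \neq \varnothing \implies \SetZSchedulesH \neq \varnothing$.

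Given $x \in \SetSchedulesH$, the plan is to produce a concrete witness $z \in \SetZSchedulesH$. I would introduce an auxiliary precedence graph $G^+$ obtained from $G(p)$ by inserting, for every pair $(i,j)$ with $i \in J \cup \{s\}$, $j \in H$, $i \prec j$, a supplementary arc $(i,j)$ of length $\LDij$, and take $z$ to be the earliest schedule of $G^+$, i.e.\ $z_j := L_{G^+}(s,j)$. By construction $z$ automatically satisfies every inequality defining $\SetZSchedulesH$, so everything reduces to verifying the makespan bound $z_t \leq M$. I will prove the stronger pointwise statement $z_j \leq x_j$ for every $j \in \AllJobs$, which at once yields $z_t \leq x_t \leq M$.

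To bound $z_j$ by $x_j$, I fix an arbitrary $(s,j)$-path $P$ in $G^+$ and decompose it along the ordered list $s = j_0, j_1, \dots, j_m$ of its vertices lying in $H \cup \{s\}$. Because every supplementary arc of $G^+$ has its head in $H$, no internal vertex of a sub-path from $j_r$ to $j_{r+1}$ can lie in $H$, so such a sub-path consists of original arcs only, possibly terminated by one supplementary arc $(k, j_{r+1})$ with $k \in J \cup \{s\}$. Its length is then bounded by $L^\Delta_{j_r j_{r+1}}$ via the concatenation inequality
\begin{equation*}
    L_{G(p)}(j_r, k) + L^\Delta_{k\, j_{r+1}} \leq L^\Delta_{j_r\, j_{r+1}},
\end{equation*}
which I would establish by picking $\delta^\star \in \Delta$ achieving the maximum in $L^\Delta_{k\, j_{r+1}}$ and concatenating a nominal longest $(j_r,k)$-path with a longest $(k,j_{r+1})$-path in $G(p+\delta^\star)$; a dangling final segment from $j_m$ to $j$ (when $j \notin H\cup\{s\}$) uses only original arcs and contributes at most $L_{G(p)}(j_m,j)$. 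Summing these segment bounds and invoking the inequalities $x_{j_{r+1}} - x_{j_r} \geq L^\Delta_{j_r\, j_{r+1}}$ for $r < m$ (valid by Proposition~\ref{prop:caracAnchored}) and $x_j - x_{j_m} \geq L_{G(p)}(j_m, j)$ (valid because $x$ is a schedule of $G(p)$) telescopes to $x_j \geq \mathrm{length}(P)$; passing to the supremum over $P$ yields $x_j \geq z_j$.

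The main obstacle, as I see it, lies in spotting the correct witness — the earliest schedule of the enlarged graph $G^+$ rather than of $\tilde G_H$ — and in carrying out the path decomposition at the $H\cup\{s\}$ break-points. The concatenation inequality itself is straightforward once one uses that $\Delta$ is down-monotone and thus contains $0$, so that nominal sub-paths can be chained freely with worst-case sub-paths without leaving $\Delta$.
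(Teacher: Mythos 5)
Your proof is correct, and it is essentially the mirror image of the paper's argument. The paper takes as witness the earliest schedule $z$ of $\GtildeH$, the graph augmented only with arcs $(i,j)$ for $i \in H \cup \{s\}$, $j \in H$: there the makespan bound $z_t \leq M$ is immediate (since $\SetSchedulesH$ is exactly the set of schedules of $\GtildeH$ with makespan at most $M$), and all the work goes into showing that the extra inequalities $z_j - z_i \geq \LD_{ij}$ for $i \notin H \cup \{s\}$ hold automatically, by looking at the \emph{last} vertex $k$ of $H \cup \{s\}$ on a longest \odpath{s}{i}path. You instead take the earliest schedule of the fully augmented graph, so those inequalities are free by construction and the work shifts to the makespan bound, which you obtain by decomposing an arbitrary path at \emph{all} of its break-points in $H \cup \{s\}$ and telescoping against an arbitrary $x \in \SetSchedulesH$. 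Both arguments hinge on the same triangle inequality $\Lz_{ki} + \LD_{ij} \leq \LD_{kj}$, proved identically (take $\delta^\star$ attaining $\LD_{ij}$ and use $\delta^\star \geq 0$), and in fact the theorem itself implies the two witnesses are the same vector. What your version buys is a slightly stronger conclusion — the earliest schedule of the enlarged graph is dominated pointwise by every element of $\SetSchedulesH$ — at the cost of a longer computation; the paper's choice of the smaller graph makes the deadline constraint trivial. One small correction: your closing remark that the concatenation inequality relies on $0 \in \Delta$ is a red herring. As your own earlier description shows, one evaluates the single concatenated path under the single realization $p+\delta^\star$ and uses only $\delta^\star \geq 0$ (i.e., $\Delta \subseteq \RealJp$) to lower-bound the nominal segment; no mixing of two realizations, and hence no down-monotonicity, is needed.
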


\begin{proof} 
Since $H \subseteq \Jobs$, the definition of set $\SetZSchedulesH$ contains more constraints than that of set $\SetSchedulesH$. Hence $\SetZSchedulesH \subseteq \SetSchedulesH$.
Let us prove $\SetSchedulesH \not= \varnothing \implies \SetZSchedulesH \not= \varnothing$.
Note that $\SetSchedulesH$ is exactly the set of schedules of $\GtildeH$ with makespan at most $M$.
Let $z$ be the earliest schedule of $\GtildeH$. By assumption, there exists a schedule in $\SetSchedulesH$, thus $z_t \leq M$.
Let $i \in J \cup \{s\}$ and $j \in H$. Let us show that $z_j - z_i \geq \LDij$ holds, even for $i \notin H \cup \{ s\}$. 
Consider a longest \odpath{s}{i}path $P^*_{si}$ in $\GtildeH$, and let $k \in H \cup \{s\}$ be the last vertex of $H \cup \{s\}$ on this path. Then $z_i = L_{\Gtilde}(s,i) = L_{\Gtilde}(s,k) + L_{\Gtilde}(k,i) = z_k + L_{\Gtilde}(k,i)$. The subpath of $P^*_{si}$ from $k$ to $i$ is a longest \odpath{k}{i}path in $\GtildeH$. Since it has no vertex in $H \cup \{s\}$ except $k$, it uses no additional arc and its length is $L_{\Gtilde}(k,i) = \Lz_{ki}$.
It comes $z_k - z_i = - \Lz_{ki}$.
Since $z \in \SetSchedulesH$, $z_j - z_k \geq \LD_{kj}$.
Then $z_j - z_i = (z_j - z_k) + (z_k - z_i) \geq \LD_{kj} - \Lz_{ki}$.
Also, for $k \prec i \prec j$, it holds that $\LD_{kj} \geq \Lz_{ki} + \LD_{ij}$: indeed $\LD_{ij}$ is attained for some $\delta^* \in \Delta \subseteq \RealJp$, hence 
$\LD_{kj} \geq L_{G(p+\delta^*)}(k,i) + L_{G(p+\delta^*)}(i,j) \geq \Lz_{ki} + \LDij$.
Finally $z_j - z_i \geq \LDij$, hence $z \in \SetZSchedulesH$.
\hfill $\square$
\end{proof}

\subsection{Dominance-based linear formulation (Dom)}
\label{sec:form:dom}

 (AnchRob) problem is to maximize the weight of a set $H$ such that $\SetSchedulesH \not= \varnothing$, or equivalently with Theorem~\ref{thm:dominance}, such that $\SetZSchedulesH \not= \varnothing$.
We now introduce a new formulation derived from Theorem~\ref{thm:dominance}, where continuous variables $z$ correspond to a schedule $z \in \SetZSchedulesH$.

\begin{center}
\def\arraystretch{1}
\begin{tabularx}{\textwidth}{l r l l X}
(Dom):&    $\max$ & $\sum_{i \in J} \aw_i h_i$  \\
&     s.t. & $z_j - z_i \geq p_i$ & $\forall (i,j) \in \Arcset$ \\ 
 &         & $z_t \leq M$ \\
 &         & $z_j - z_i \geq \Lz_{ij} + (\LD_{ij} - \Lz_{ij}) h_j$ & $\forall i,j \in \AllJobs,\ i \prec j$ & \hfill \myeqrefdom\\
 &         & $z_j \geq 0$ & $\forall j \in \AllJobs$\\
&& $h_j \in \{0,1\}$ & $\forall j \in \Jobs$\\
\end{tabularx}
\end{center}

\begin{proposition}
\label{prop:MIPzh}
Formulation (Dom) is valid for (AnchRob).
\end{proposition}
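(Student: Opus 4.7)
The plan is to prove the two inclusions $\Projh(\mathcal{P}_{\text{Dom}}) \cap \BinJobs \subseteq \Hrond$ and $\Hrond \subseteq \Projh(\mathcal{P}_{\text{Dom}}) \cap \BinJobs$ where $\mathcal{P}_{\text{Dom}}$ is the polyhedron defined by (Dom). The key observation that makes both directions work is that constraint \myeqrefdom\ linearly interpolates between the two useful regimes: when $h_j=0$ it degenerates to $z_j-z_i \geq \Lz_{ij}$, which is automatically satisfied by any schedule of $G(p)$; when $h_j=1$ it enforces exactly the defining inequality of $\SetZSchedulesH$.

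For the forward direction, let $(z,h)$ be feasible for (Dom) with $h \in \BinJobs$, and set $H = \{j \in J : h_j=1\}$. The first constraint makes $z$ a schedule of $G(p)$, and $z_t \leq M$ gives the makespan bound (using the convention $h_t=0$, under which \myeqrefdom\ with $j=t$ is implied by the precedence inequalities). For every $j \in H$ and every $i \in J \cup \{s\}$ with $i \prec j$, setting $h_j=1$ in \myeqrefdom\ gives $z_j - z_i \geq \LDij$. In particular this holds for $i \in H \cup \{s\}$, so Proposition~\ref{prop:caracAnchored} applies and $H$ is $z$-anchored, hence anchored.

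For the reverse direction, let $H$ be an anchored set. Then $\SetSchedulesH \neq \varnothing$, and by Theorem~\ref{thm:dominance} there exists $z \in \SetZSchedulesH$. Setting $h=\indic{H}$, one verifies all constraints of (Dom): the precedence and deadline constraints hold because $z$ is a schedule of $G(p)$ with $z_t \leq M$; for \myeqrefdom\ with $h_j=0$ the inequality $z_j-z_i \geq \Lz_{ij}$ holds because $\Lz_{ij}$ is the longest $i$-$j$ path in $G(p)$ and $z$ is a schedule of $G(p)$; for $h_j=1$, i.e. $j \in H$, the inequality $z_j-z_i \geq \LDij$ holds for every $i \in J\cup\{s\}$ with $i \prec j$ by the defining inequalities of $\SetZSchedulesH$.

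There is no genuine obstacle here; the work is essentially a bookkeeping check, and the substantive content has already been packaged into Proposition~\ref{prop:caracAnchored} and Theorem~\ref{thm:dominance}. The only subtlety worth flagging in the write-up is the treatment of the dummy vertices $s$ and $t$ in \myeqrefdom, with the convention $h_s=1$, $h_t=0$, to make sure each such instance of \myeqrefdom\ reduces to a constraint that is either a precedence-graph longest-path inequality in $G(p)$ or one of the defining inequalities of $\SetZSchedulesH$.
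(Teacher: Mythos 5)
Your proof is correct and follows essentially the same route as the paper's: the forward direction reads off $z\in\SetSchedulesH$ from \myeqrefdom\ via Proposition~\ref{prop:caracAnchored}, and the reverse direction invokes Theorem~\ref{thm:dominance} to obtain $z\in\SetZSchedulesH$ and checks \myeqrefdom\ by the same case split on $h_j\in\{0,1\}$. Your extra remark on the dummy-vertex convention $h_s=1$, $h_t=0$ is a harmless elaboration of what the paper leaves implicit.
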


\begin{proof}
Let $(z,h)$ be feasible for (Dom), and $H := \{i \in J:\ h_i = 1 \}$. Note first that inequality \myeqrefdom\ implies $z \in \SetSchedulesH$. Hence $z$ is a schedule with makespan at most $M$ and $H$ is $z$-anchored. Hence $h \in \Hrond$.
Conversely, let $h \in \Hrond$ be the incidence vector of an anchored set $H$.
From Prop.~\ref{prop:caracAnchored} and Theorem~\ref{thm:dominance}, there exists $z \in \SetZSchedulesH$. Such a schedule $z$ satisfies $z_j - z_i \geq \LDij$ for every $i \in J \cup \{s \}$ and $j \in H$.
Then $z$ satisfies \myeqrefdom: indeed if $h_j = 1$ inequality \myeqrefdom\ corresponds to inequality $z_j - z_i \geq \LDij$; if $h_j = 0$ then \myeqrefdom\ amounts to $z_j - z_i \geq \Lz_{ij}$, which holds since $z$ is a schedule of $G(p)$.
\hfill $\square$
\end{proof}

Note that precedence constraint $z_j - z_i \geq p_i$ associated with arc $(i,j) \in \Arcset$ is implied by \myeqrefdom\ since $i \prec j$ and $\Lz_{ij} \geq p_i$.

\medbreak
We now introduce a family of valid inequalities. Let $j \in J$. By inequality \myeqrefdom\ with $i=s$, it comes that $z_j \geq \Lz_{sj} + (\LD_{sj} - \Lz_{sj}) h_j$. Since $z \in \SetSchedules$ it satisfies $z_j + \Lz_{jt} \leq M$, thus leading to $M \geq \Lz_{sj} + (\LD_{sj} - \Lz_{sj}) h_j + \Lz_{jt}$. Hence the following valid inequality
\begin{equation}
    \tag{\myeqrefboxsanspar}
    \label{eq:chvatal}
    h_j \leq \left\lfloor \frac{M - (\Lz_{sj} + \Lz_{jt})}{\LD_{sj} - \Lz_{sj}} \right\rfloor.
\end{equation}
In Section~\ref{sec:polyhedralCharac} they will be discussed with respect to the polyhedral characterization under box uncertainty.

\subsection{Comparison with known formulations}
\label{sec:form:tighter}

In this section formulation (Dom) is compared with (Std) and with (Lay), the previously known formulation from \citep{AnchRobPSPHal} dedicated to budgeted uncertainty.

\medbreak
Consider first the continuous relaxations of (Dom), (Std) and the variant of (Std) with quadratic constraint \myeqrefquad. It turns out that they can easily be compared. Indeed for every $h_i, h_j \in [0,1]$, it holds that
$h_j \geq h_i h_j$.
Hence the right-hand side of inequality \myeqrefdom\ is tighter than the right-hand side of inequality \myeqrefquad.
Also for every $h_i, h_j \in [0,1]$, it holds that
$(1 - h_i)(1 - h_j) \geq 0$ or equivalently
$h_i h_j \geq h_i+h_j - 1$.
Hence the right-hand side of inequality \myeqrefquad\ is tighter than the right-hand side of inequality \myeqrefstd.

\medbreak
Let us now investigate the special case of budgeted uncertainty, and compare (Dom) with formulation (Lay) from \citep{AnchRobPSPHal}, that we now recall.
Formulation (Lay) involves anchoring variables $h_j \in \{0,1\}$ for every $j \in J$, and continuous variables $\gam{x}_j$ for every $\gamma \in \{0, \dots, \Gamma\}$, $j \in \AllJobs$.
Formulation (Lay) is based on a so-called \emph{layered graph} $\Glay(h)$ associated with $h \in \CubeJ$ and built as follows. The layered graph $\Glay(h)$ is formed with $\Gamma+1$
copies of the precedence graph called \emph{layers} indexed from $0$ to $\Gamma$.
Let $\gam{i}$ denote the copy of job $i$ in layer $\gamma$. The layered graph features three types of arcs. Horizontal arcs are copies of arcs of $G(p)$, i.e., arcs $(\gam{i}, \gam{j})$ for $(i,j) \in \Arcset$, with length $p_i$. Transversal arcs are $(\gamp{i}, \gam{j})$ for $(i,j) \in \Arcset$, with length $p_i+\dhat_i$. Vertical arcs are $(\gam{i}, \Gam{i})$ for $i \in J$, $\gamma < \Gamma$, with length $-\Mjp(1-h_j)$, where $\Mjp = L_{G(p+\dhat)}(s,j) - L_{G(p)}(s,j)$ for every $j \in J$. It was shown that $h \in \BinJobs$ is the incidence vector of an anchored set if and only if there exists $x$ a schedule of $\Glay(h)$ such that $\Gam{x}_t \leq M$. This leads to the formulation

\begin{center}
\def\arraystretch{1}
\begin{tabularx}{\textwidth}{l r l l l}
(Lay): & $\max$ & $\sum_{i \in \Jobs} \aw_i h_i$ \\
&s.t. & $\gam{x}_j - \gam{x}_i \geq p_i$ & $\forall (i,j) \in \Arcset$, $\forall \gamma \in \Budgets$ & \\
&& $\gam{x}_j - \gamp{x}_i \geq p_i + \dhat_i$ & $\forall (i,j) \in \Arcset$, $\forall \gamma \in \BudgetsLow$ & \\
&& $\Gam{x}_j - \gam{x}_j \geq - \Mjp (1 - h_j)$ & $\forall j \in \Jobs$, $\forall \gamma \in \BudgetsLow$ & \\

& & $\Gam{x}_t \leq M$ & & \\
&& $\gam{x}_j \geq 0$ & $\forall j \in \AllJobs,\ \forall \gamma \in \Budgets$\\
&& $h_j \in \{0,1\}$ & $\forall j \in \Jobs$\\
\end{tabularx}
\end{center}

Variables $x^{\Gamma}_j$, $j \in \AllJobs$ from layer $\Gamma$ can be thought of as a baseline schedule such that $H = \{ i \in J:\ h_i = 1\}$ is $x^{\Gamma}$-anchored. Other continuous variables $x^{\gamma}_j$, $j \in \AllJobs$, $\gamma < \Gamma$ can be regarded as additional variables.

\medbreak
In order to compare linear relaxations of (Lay) and (Dom), we show how to project explicitly those two formulations on the space of $h$ variables. Let $\mathcal{C}$ be the set of \stpath paths in the transitive closure of $G$. Note that there is one-to-one correspondence with chains of the poset $(J, \prec)$.
Let $\PolyLay$ (resp. $\PolyDom$) denote the polytope of solutions $( (\gam{x})_{\gamma \in \{0, \dots, \Gamma\}}, h)$ (resp. $(z,h)$) that are feasible for the continuous relaxation of formulation (Lay) (resp. (Dom)).

\begin{proposition}
\label{prop:projections}
$$\Projh(\PolyDom) = \left\lbrace h \in \CubeJ: \sum_{\substack{(i,j)\in C \\ j \not= t}} \left(\Lz_{ij} + (\LDij - \Lz_{ij}) h_j \right) + \Lz_{j_C t} \leq M\ \forall C \in \mathcal{C} \right\rbrace$$
$$\Projh(\PolyLay) = \left\lbrace h \in \CubeJ: \sum_{\substack{(i,j)\in C \\ j \not= t}} \left(\LDij - \Mjp(1-h_j) \right) + \Lz_{j_C t} \leq M\ \forall C \in \mathcal{C} \right\rbrace$$
where $j_C$ denotes the last vertex of path $C$ before sink $t$.
\end{proposition}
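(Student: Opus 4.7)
The plan is to treat both claimed identities in parallel: in each case the continuous variables appear only in difference inequalities whose right-hand sides depend linearly on $h$, so $h \in \Projh$ if and only if a difference-constraint system on a suitable weighted DAG is feasible. By the classical longest-path reformulation on a DAG, the latter reduces to requiring that every directed path from a zero-valued source to the sink have weight at most $M$. The proposition then claims that this family of path bounds collapses, for both polyhedra, to the chain-indexed family displayed in the statement.

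For (Dom) the relevant DAG is the transitive closure of $G$ with arc weights $\ell_{ij}(h) := \Lz_{ij} + (\LDij - \Lz_{ij}) h_j$ (using the convention $h_t = 0$), whose $s$-to-$t$ paths are precisely the elements of $\mathcal{C}$. The forward inclusion is obtained by telescoping inequality \myeqrefdom\ along the arcs of any chain $C$ and combining with $z_s \geq 0$ and $z_t \leq M$; the convention $h_t = 0$ cancels the $(\LDij - \Lz_{ij}) h_j$ term on the last arc and leaves the summand $\Lz_{j_C t}$. For the reverse inclusion, given $h$ satisfying every chain inequality, I would set $z_j$ equal to the maximum length of a chain from $s$ to $j$ under weights $\ell(h)$: this definition yields $z_j - z_i \geq \ell_{ij}(h)$ for every $i \prec j$ by the longest-chain recurrence, the nominal precedence constraints follow from $\ell_{ij}(h) \geq \Lz_{ij} \geq p_i$, and $z_t \leq M$ is precisely the chain inequality for the maximizing chain.

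For (Lay) the DAG is the layered graph $\Glay(h)$ itself. Feasibility of its continuous relaxation with $x^\Gamma_t \leq M$ is equivalent to the longest path from a zero-valued source $s^\gamma$ to $t^\Gamma$ being at most $M$, so the task reduces to identifying this longest path with the maximum chain sum over $C \in \mathcal{C}$. To each chain $C = (s, j_1, \dots, j_k, t)$ I would associate a canonical layered-graph path from $s^\Gamma$ to $t^\Gamma$ that, for every non-final segment $(j_{m-1}, j_m)$, first descends via transversal arcs chosen to realize $\LDij$ (with subscripts $j_{m-1}, j_m$) along a $G$-subpath and then lifts back to layer $\Gamma$ through a vertical arc at $j_m$ of weight $-\Mjp(1-h_{j_m})$ (with $j = j_m$), and then finishes by a horizontal $j_k$-to-$t$ traversal in layer $\Gamma$ contributing $\Lz_{j_k t}$; its total weight coincides with the chain sum. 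Conversely, any layered-graph path is bounded above by the chain sum of the chain formed by its vertical-arc breakpoints, by invoking $\LDij = \max_{\delta \in \Delta} L_{G(p+\delta)}(i,j)$ to upper-bound each segment's contribution.

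The main obstacle I foresee is the (Lay) direction: correctly matching the layer-change accounting with the chain-segment accounting. One must argue that along any $s^\gamma$-to-$t^\Gamma$ path the transversal and vertical arcs can be regrouped into chain segments so that each non-final segment contributes at most $\LDij$ (for its $(j_{m-1}, j_m)$ pair), the vertical arcs together contribute precisely $-\sum_m \Mjp(1-h_{j_m})$, and the final segment contributes exactly $\Lz_{j_k t}$ because no transversal arc can appear after the last vertical. The sub-additivity $\LD_{kj} \geq \Lz_{ki} + \LDij$ used in the proof of Theorem~\ref{thm:dominance}, combined with the defining identity $\Mjp = L_{G(p+\dhat)}(s,j) - \Lz_{sj}$, is the key ingredient I expect to invoke for both the tightness of the canonical chain-realizing path and the upper bound on arbitrary layered-graph paths.
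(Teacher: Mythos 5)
Your proposal is correct and follows essentially the same route as the paper: reduce feasibility of each difference-constraint system to a longest-path condition ($L_{\Gbar(h)}(s,t)\leq M$ for (Dom), $L_{\Glay(h)}(\Gam{s},\Gam{t})\leq M$ for (Lay)), identify $s$--$t$ paths of the transitive closure with chains in $\mathcal{C}$, and for (Lay) both build the canonical chain-realizing path (transversal descent realizing $\LDij$, then a vertical lift) and bound an arbitrary layered path by regrouping it at its vertical-arc breakpoints, each segment being at most $\LDij$ because it uses at most $\Gamma$ transversal arcs. The only cosmetic difference is that the paper does not need the sub-additivity $\LD_{kj}\geq \Lz_{ki}+\LDij$ here — the ``at most $\Gamma$ transversal arcs'' observation, which you also state, suffices.
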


\begin{proof}
Let $h \in \CubeJ$.
Let $\Gbar(h)$ denote the transitive closure of $G$, where arc $(i,j)$ is given the weight $\Lz_{ij} + (\LD_{ij} - \Lz_{ij}) h_j$, with $h_t=0$ for the ease of notation. Then $(z,h) \in \PolyDom$ if and only if $z$ is a schedule of $\Gbar(h)$ with makespan $z_t \leq M$, by definition of formulation (Dom). The existence of such $z$ is equivalent to $L_{\Gbar(h)}(s,t) \leq M$; or equivalently, every \stpath path in $\Gbar(h)$ has length at most $M$. The length of path $C \in \mathcal{C}$ in $\Gbar(h)$ is exactly the left-hand side of the proposed inequality, hence the result.

A similar proof holds for formulation (Lay).
Let $h \in \CubeJ$.
Let $\ell_{\Glay(h)}(R)$ denote the length of a path $R$ in the layered graph $\Glay(h)$.
It holds that $h \in \Projh(\PolyLay)$ if and only if there exists $x$ a schedule of $\Glay(h)$ such that $\Gam{x}_t \leq M$. The existence of $x$ is equivalent to the longest path condition: $L_{\Glay(h)} (\Gam{s}, \Gam{t}) \leq M$; or equivalently, $\ell_{\Glay(h)}(\PathInLay) \leq M$ for every path $\PathInLay$ from $\Gam{s}$ to $\Gam{t}$ in the layered graph $\Glay(h)$.
Let us now show that it is equivalent to $\sum_{{(i,j)\in C,\ j \not= t}} \left(\LDij - \Mjp(1-h_j) \right) + \Lz_{j_C t} \leq M$ for every $C \in \mathcal{C}$.

Assume first $\ell_{\Glay(h)}(\PathInLay) \leq M$ for every path $\PathInLay$ from $\Gam{s}$ to $\Gam{t}$ in $\Glay(h)$. Let $C \in \mathcal{C}$ be an \stpath path in the transitive closure of $G$. Consider an associated \odpath{\Gam{s}}{\Gam{t}} path $\PathInLay^*$ in the layered graph built as follows: for every arc $(i,j) \in C$, $j \not= t$, path $\PathInLay^*$ contains the subpath of length $\LDij$ going from $\Gam{i}$ to a copy $\gam{j}$ of $j$, and the vertical arc $(\gam{j}, \Gam{j})$; path $\PathInLay^*$ also contains the subpath of length $\Lz_{j_C t}$ going from $\Gam{j_C}$ to $\Gam{t}$. Then $\PathInLay^*$ is an \odpath{\Gam{s}}{\Gam{t}}path in $\Glay(h)$, by assumption it has length at most $M$, hence $\sum_{{(i,j)\in C,\ j \not= t}} \left(\LDij - \Mjp(1-h_j) \right) + \Lz_{j_C t} \leq M$.

Conversely assume $\sum_{{(i,j)\in C,\ j \not= t}} \left(\LDij - \Mjp(1-h_j) \right) + \Lz_{j_C t} \leq M$ for every $C \in \mathcal{C}$. Let $\PathInLay$ be an \odpath{\Gam{s}}{\Gam{t}}path in $\Glay(h)$. Let $C^*$ be the path defined the successive jobs $j$ such that $\PathInLay$ contains a vertical arc $(\gam{j}, \Gam{j})$; then $C^* \in \mathcal{C}$. For every $(i,j) \in C^*$, let $R^{(i,j)}$ denote the subpath of $R$ from $\Gam{i}$ to a copy $\gam{j}$ of job $j$. Since $R^{(i,j)}$ uses at most $\Gamma$ transversal arcs, it comes $\ell_{\Glay(h)}(R^{(i,j)}) \leq \LDij$. Similarly, let $R^{(j_{C^*},t)}$ denote the subpath of $R$ between $\Gam{j_{C^*}}$ and $\Gam{t}$, then $\ell_{\Glay(h)}(R^{(j_{C^*},t)}) \leq \Lz_{j_{C^*} t}$.
The total length of $R$ is $\ell_{\Glay(h)}(R) = \sum_{(i,j) \in C^*, j \not= t} (\ell_{\Glay(h)}(R^{(i,j)}) - \Mjp(1-h_j)) + \ell_{\Glay(h)}(R^{(j_{C^*},t)})$, thus upper-bounded by $\sum_{(i,j) \in C^*, j \not= t} (\LDij - \Mjp(1-h_j)) + \Lz_{j_{C^*} t}$. By assumption this at most $M$ since $C^* \in \mathcal{C}$, hence $\ell_{\Glay(h)}(\PathInLay) \leq M$. This proves the claimed result.
\hfill $\square$
\end{proof}

\begin{proposition}
\label{prop:DomDomineFormX}
If the instance satisfies
    $L_{G(p+\dhat)}(s,j) - \Lzsj \geq \LDij - \Lzij$ for every $i \prec j$,
then
formulation (Dom) is stronger than formulation (Lay), in the sense that $\Projh(\PolyDom) \subseteq \Projh(\PolyLay)$.
\end{proposition}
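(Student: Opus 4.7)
The plan is to rely directly on the explicit projections computed in Proposition~\ref{prop:projections}, which reduce the comparison to checking, chain by chain, that every defining inequality of $\Projh(\PolyDom)$ implies the corresponding defining inequality of $\Projh(\PolyLay)$. Since the two descriptions are indexed by the same set of chains $\mathcal{C}$ of $(J,\prec)$ and share the same right-hand side $M$ and the same trailing term $\Lz_{j_C t}$, it suffices to show a term-by-term inequality on arc contributions.

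\noindent\textbf{Key step.} For a fixed chain $C \in \mathcal{C}$ and a fixed arc $(i,j) \in C$ with $j \neq t$, I would compare the two arc contributions
\[
a^{\text{Dom}}_{ij}(h) := \Lz_{ij} + (\LDij - \Lz_{ij})\,h_j, \qquad a^{\text{Lay}}_{ij}(h) := \LDij - \Mjp(1 - h_j),
\]
and prove $a^{\text{Dom}}_{ij}(h) \geq a^{\text{Lay}}_{ij}(h)$ for every $h \in \CubeJ$. A direct rearrangement gives
\[
a^{\text{Dom}}_{ij}(h) - a^{\text{Lay}}_{ij}(h) = (1 - h_j)\bigl(\Mjp - (\LDij - \Lz_{ij})\bigr).
\]
Under the hypothesis $\Mjp = L_{G(p+\dhat)}(s,j) - \Lzsj \geq \LDij - \Lzij$ for every $i \prec j$, and since $1-h_j \geq 0$, the right-hand side is non-negative. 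Summing over the arcs of $C$ and adding the common term $\Lz_{j_C t}$ to both sides shows that the Dom inequality associated with $C$ is at least as strong as the Lay inequality associated with $C$.

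\noindent\textbf{Conclusion.} Take any $h \in \Projh(\PolyDom)$. Then for every chain $C \in \mathcal{C}$ the Dom inequality holds, hence by the previous step so does the Lay inequality. Applying Proposition~\ref{prop:projections} in the reverse direction yields $h \in \Projh(\PolyLay)$, giving the required inclusion.

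\noindent\textbf{Main obstacle.} The argument is almost entirely algebraic once Proposition~\ref{prop:projections} is available; the only subtle point is the sign analysis of the factor $\Mjp - (\LDij - \Lz_{ij})$, which is precisely what the hypothesis is tailored to control. I expect no genuine difficulty beyond verifying this inequality arc by arc; the conceptual insight is that the assumption makes the ``per-job deviation budget'' $\Mjp$ used by (Lay) dominate the ``per-arc extra length'' $\LDij - \Lz_{ij}$ used by (Dom), which is exactly what is needed for the chain-wise comparison.
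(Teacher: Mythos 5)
Your proposal is correct and follows essentially the same route as the paper: both reduce the claim via Proposition~\ref{prop:projections} to the arc-wise comparison $\LDij - \Mjp(1-h_j) \leq \Lzij + (\LDij-\Lzij)h_j$, which after rearrangement is exactly your factorization $(1-h_j)\bigl(\Mjp - (\LDij-\Lzij)\bigr) \geq 0$, guaranteed by the hypothesis. No gaps; the chain-by-chain summation and conclusion match the paper's argument.
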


\begin{proof}
For every $i \prec j$, it holds that
$\LDij - D_j (1-h_j) \leq \Lzij + (\LDij - \Lzij) h_j$. Indeed it is equivalent to $(D_j - \LDij + \Lzij ) h_j \leq D_j - \LDij + \Lzij$. This latter inequality is satisfied for every $h_j \in [0,1]$, since $D_j - \LDij + \Lzij \geq 0$ by the assumption.
Using the explicit definition of projections in Prop.~\ref{prop:projections}, it comes that the inequalities defining the projection of (Dom) are tighter than the inequalities defining the projection of (Lay). \hfill $\square$
\end{proof}

Importantly, the assumption of Proposition~\ref{prop:DomDomineFormX} is satisfied if the precedence graph is critical. Indeed, assume $G(p)$ is critical and let $i \prec j$. The $D_j$ value satisfies $D_j \geq \LD_{sj} - \Lzsj \geq \Lz_{si} +\LD_{ij} - \Lzsj$. Since $G(p)$ is critical, it holds that $\Lz_{si} + \Lz_{ij} = \Lz_{sj}$, hence $D_j \geq \LDij - \Lzij$.

We now give an example satisfying the assumption of Proposition~\ref{prop:DomDomineFormX}, and where $\Projh(\PolyDom)$ is strictly included in $\Projh(\PolyLay)$. Let $J = \{1, 2, 3\}$ be a set of three jobs, let $G$ be the path $(s, 1, 2, 3, t)$, and let $p_i = 1$ and $\dhat_i = 1$ for every $i \in J$. Let also $\Gamma=1$, and $M = 3$. Consider $h^* = (1, 0, \frac{1}{2})$. To see that $h \in \Projh(\PolyLay)$, consider the layered graph $\Glay(h^*)$, represented in Figure~\ref{fig:exlay}.
The vector $x$ defined by $x^1 = (0, 0, 1, 2, 3)$ in layer $1$, $x^0 = (0, 0, 2, 3, 4)$ in layer $0$ is also represented into brackets on the vertices in Figure~\ref{fig:exlay}. 
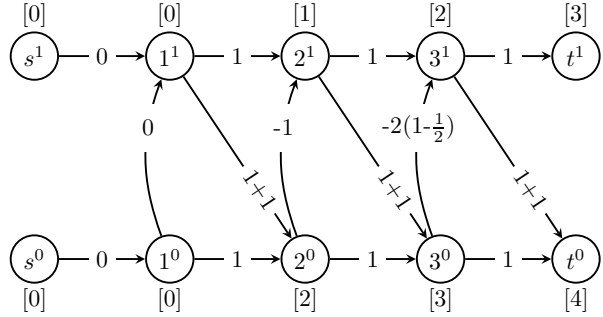
\begin{figure}[H]
    \centering
    \scalebox{0.9}{
        \begin{tikzpicture}[yscale=1, xscale=1]
    
        \tikzset{every node/.style={inner sep=0pt, minimum height=0.7cm}}
        \tikzstyle{job}=[draw, circle, thick]
        \tikzset{arc/.style={thick,->,>=stealth}}
        \tikzset{longpath/.style={arc, very thick, blue}}
        
        \tikzstyle{job1}=[job, shift={(0, -0)}]
        \tikzstyle{job0}=[job, shift={(0, -3)}]
        \tikzset{arc/.style={thick,->,>=stealth}}
        
        \tikzstyle{job}=[draw, circle, thick]
        \tikzset{arc/.style={thick,->,>=stealth}}
        \tikzset{every node/.style={inner sep=0pt, minimum height=0.7cm}}
        
        \tikzset{edgelabel/.style={fill=white, inner sep = 3pt, midway}}

        \foreach \layer in {0, 1}
            { 
            \foreach \jobname/\abs in { s/0, 1/2, 2/4, 3/6, t/8 }
            \node[job\layer] (\jobname-\layer) at (\abs, 0) {$\jobname^{\layer}$}; 

            }

        \foreach \layer in {0, 1}
            {
            \draw[arc] (s-\layer) -- (1-\layer)node[edgelabel] {0};
            \draw[arc] (1-\layer) -- (2-\layer)node[edgelabel] {1};
            \draw[arc] (2-\layer) -- (3-\layer)node[edgelabel] {1};
            \draw[arc] (3-\layer) -- (t-\layer)node[edgelabel] {1};
            }
            
        \foreach \layer/\laybas in {1/0}
            {
            \draw[arc] (1-\layer) -- (2-\laybas)node[edgelabel, sloped,pos = 0.7] {1+1};
            \draw[arc] (2-\layer) -- (3-\laybas)node[edgelabel, sloped,pos = 0.7] {1+1};
            \draw[arc] (3-\layer) -- (t-\laybas)node[edgelabel, sloped, pos=0.7] {1+1};
            }
            
            
            
        \draw[arc] (1-0) edge [bend left=20]node[edgelabel,midway, pos = 0.7]{0} (1-1);
        \draw[arc] (2-0) edge [bend left=20]node[edgelabel,midway,pos = 0.7]{-1} (2-1);
        \draw[arc] (3-0) edge [bend left=20]node[edgelabel,midway,pos = 0.7]{-2(1-$\frac{1}{2}$)} (3-1);

        \foreach \layer in {1}
            { 
            \foreach \jobname/\abs/\sched in { s/0/0, 1/2/0, 2/4/1, 3/6/2, t/8/3}
            \node[] (x-\jobname-\layer) at (\abs, 0+0.6) {[\sched]}; 
            }    
        \foreach \layer in {0}
            { 
            \foreach \jobname/\abs/\sched in { s/0/0, 1/2/0, 2/4/2, 3/6/3, t/8/4}
            \node[] (x-\jobname-\layer) at (\abs, -3.6) {[\sched]}; 
            }    
            
    \end{tikzpicture}
    
    }
    \caption{Layered graph $\Glay(h^*)$ for $h^*=(1,0,\frac{1}{2})$.}
    \label{fig:exlay}
\end{figure}

It can be checked that vector $x$ is a schedule of $\Glay(h^*)$, hence $(x,h^*) \in \PolyLay$. By contrast, vector $h^*$ violates inequality defining $\Projh(\PolyDom)$ associated with path $P = (s,3,t)$. Indeed $\Lz_{s3} = 2$, $\LD_{s3}=3$, $\Lz_{3t}=1$ and the inequality is $2 + h_3 + 1 \leq 3$. Thus $h^* \notin \Projh(\PolyDom)$, and $\Projh(\PolyDom) \subsetneq \Projh(\PolyLay)$. More precisely, it is a case where (Dom) yields a polyhedral characterization of (AnchRob), as shown next in Section~\ref{sec:UAnchRob}.

\medbreak
The question whether (Dom) is stronger than (Lay) for any instance, is left open. However in numerical experiments presented in Section~\ref{sec:num}, the linear bound of (Dom) was always better than the linear bound of (Lay).

\section{Polyhedral characterization for special cases}
\label{sec:polyhedralCharac}

In this section, we provide polyhedral characterizations of (AnchRob) for two special cases: box uncertainty and 1-disruption uncertainty for critical precedence graph.

\subsection{Box uncertainty}

This section is devoted to box uncertainty, which is the special case of budgeted uncertainty where $\Gamma = |\Jobs|$, and thus $\dhat \in \Delta$.
Note first that if $\xbas$ the earliest schedule of $G(p+\dhat)$ satisfies the deadline constraint $\xbas_t \leq M$, then all jobs can be anchored and $(\xbas, J)$ is an optimal solution of (AnchRob). However in general the schedule $\xbas$ may have a makespan larger than $M$, hence not all jobs can be anchored (see, e.g., the example in Figure~\ref{fig:scheduleBox} in Section~\ref{sec:prelim}).

In \citet{AnchRobPSPHal} a polynomial algorithm for (AnchRob) under box uncertainty was provided.
This algorithm is as follows:
compute $\xbas$ the earliest schedule of $G(p+\dhat)$ and $\xhaut$ the latest schedule of $G(p)$ such that $\xhaut_t=M$; let $H^* = \{i \in \Jobs\ |\ \xbas_i \leq \xhaut_i\}$ and $x_i = \min \{\xbas_i, \xhaut_i \}$ for every $i \in \AllJobs$; return $(x, H^*)$.
\medbreak
The main result is that in this case the polytope $\Qrond$ of anchored sets is characterized by inequalities \eqref{eq:chvatal}. 

\begin{theorem}
\label{thm:box}
For box uncertainty,
$$\Qrond = \{ h \in \CubeJ:\ h \text{ satisfies \eqref{eq:chvatal}}\}.$$
\end{theorem}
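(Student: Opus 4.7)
My plan is to establish the two set inclusions of the claimed equality. For $\Qrond \subseteq \{h \in \CubeJ : h \text{ satisfies \eqref{eq:chvatal}}\}$, I would reuse the derivation given right after formulation (Dom): any $h \in \Hrond$ arises from a feasible solution $(z,h)$ of (Dom), so specialising \myeqrefdom\ to $i = s$ and combining it with $z_j + \Lz_{jt} \leq M$ gives $(\LD_{sj} - \Lz_{sj}) h_j \leq M - \Lz_{sj} - \Lz_{jt}$; a Chv\'atal--Gomory rounding, valid since $h_j \in \{0,1\}$, then yields \eqref{eq:chvatal}. This inequality is therefore valid on $\Qrond = \conv(\Hrond)$.

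For the reverse inclusion I would exploit the box structure to describe the right-hand side explicitly. With $\LD_{sj} = L_{G(p+\dhat)}(s,j) = \xbas_j$ and $\xhaut_j = M - \Lz_{jt}$, the fraction in \eqref{eq:chvatal} equals $(\xhaut_j - \Lz_{sj})/(\xbas_j - \Lz_{sj})$ whenever the denominator is positive. Its floor is at least $1$ precisely when $\xbas_j \leq \xhaut_j$, in which case \eqref{eq:chvatal} is dominated by the box bound $h_j \leq 1$; otherwise the floor equals $0$ and \eqref{eq:chvatal} forces $h_j = 0$. The degenerate case $\LD_{sj} = \Lz_{sj}$ falls into the first regime, since the pre-rounded inequality $0 \leq M - \Lz_{sj} - \Lz_{jt}$ holds automatically for any feasible instance. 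Writing $H^* = \{j \in J : \xbas_j \leq \xhaut_j\}$, exactly the set returned by the polynomial algorithm recalled above, this rewriting shows
$$\{h \in \CubeJ : h \text{ satisfies \eqref{eq:chvatal}}\} = [0,1]^{H^*} \times \{0\}^{J \setminus H^*},$$
a coordinate hypercube whose vertices are the incidence vectors of subsets of $H^*$.

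It then suffices to show that every $H \subseteq H^*$ is an anchored set. The polynomial algorithm certifies that $(x, H^*)$, with $x_i = \min\{\xbas_i, \xhaut_i\}$, is feasible for (AnchRob), so $H^*$ is $x$-anchored; the same schedule $x$ also $x$-anchors every $H \subseteq H^*$, since the defining conditions for the anchoring of $H$ form a subset of those for $H^*$. Hence $\indic{H} \in \Hrond$ for each vertex of the hypercube, and taking convex hulls yields $[0,1]^{H^*} \times \{0\}^{J \setminus H^*} \subseteq \Qrond$. The main conceptual step in the plan is the simplification of the right-hand side: once one carefully disposes of the Chv\'atal--Gomory rounding and of the potentially degenerate denominator so as to recognise the polyhedron as a coordinate hypercube over $H^*$, the polyhedral characterization reduces to the already-known correctness of the polynomial algorithm.
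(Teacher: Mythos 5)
Your proposal is correct and follows essentially the same route as the paper's proof: both rewrite the right-hand side, using $\LD_{sj}=\xbas_j$, as the coordinate hypercube $[0,1]^{H^*}\times\{0\}^{J\setminus H^*}$, obtain one inclusion from the validity of \eqref{eq:chvatal} on anchored sets, and the other from the fact that $H^*$ is anchored together with heredity of anchored sets under taking subsets. Your explicit treatment of the degenerate denominator $\LD_{sj}=\Lz_{sj}$ is a small refinement the paper leaves implicit, but it does not change the argument.
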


\begin{proof}
Consider schedules $\xbas$ and $\xhaut$, defined by $\xbas_j = L_{G(p+\dhat)}(s,j)$ and $\xhaut_j = M - \Lz_{jt}$ for every $j \in J$.
Note first that for box uncertainty, $\LD_{sj} = L_{G(p+\dhat)}(s,j) =\xbas_j$. Inequality \eqref{eq:chvatal} writes in this case: $h_j \leq \left\lfloor \frac{M - (\Lz_{sj} + \Lz_{jt})}{\LD_{sj} - \Lz_{sj}} \right\rfloor = \left\lfloor \frac{\xhaut_j - \Lz_{sj}}{ \xbas_j - \Lz_{sj}} \right\rfloor$. Equivalently, it implies $h_j \leq 1$ if $j \in H^*$, and $h_j \leq 0$ if $j \notin H^*$.
Hence $\{ h \in \CubeJ:\ h \text{ satisfies \eqref{eq:chvatal}}\} = \BoxHStar$.

Every extreme point of $\BoxHStar$ is the incidence vector of a set $H \subseteq H^*$. Since any subset of an anchored set is anchored, and $H^*$ is anchored, then $\BoxHStar \subseteq \Qrond$.
Conversely, if $H$ is an anchored set, its incidence vector $\indic{H}$ satisfies valid inequalities \eqref{eq:chvatal}: thus $H \subseteq H^*$. Hence $\Qrond \subseteq [0,1]^{H^*}\times \{0\}^{J \setminus H^*}$.
\hfill $\square$
\end{proof}

Note that Theorem~\ref{thm:box} also holds for any $\Delta$ that has a greatest element $\dhat$. This case is more general than $\Delta$ being a box, as mentioned in Section~\ref{sec:uncSets}.

\subsection{1-disruption uncertainty}

In this section, set $\Delta$ is a 1-disruption uncertainty set, i.e., a budgeted uncertainty set with $\Gamma=1$ and $\dhat_i = \dhat_0$ for every $i \in J$. In Section~\ref{sec:UAnchRob} the polyhedral characterization is shown for the special case with zero processing times. In Section~\ref{sec:critical} it is extended to any critical precedence graph with non-zero processing times.

\subsubsection{The Unitary (AnchRob)}
\label{sec:UAnchRob}
Assume first $p_i = 0$ for every $i \in J$.
W.l.o.g. $\dhat_0 = 1$. Finally assume deadline $M$ integer.
If $M$ is not integer, it can be replaced with $\lfloor M \rfloor$ w.l.o.g.: indeed, since $p$ and $\dhat$ are integer, for any $H$ the earliest schedule of $\Gtilde_H$ is integer-valued with integer makespan.

In this special case, an instance is the precedence graph $G=(\AllJobs, \Arcset)$, $p=0$, $\dhat_0=1$, integer deadline $M$, and anchoring weights. This special case is referred to as the Unitary Anchor-Robust (U-AnchRob) problem.
For unit anchoring weights, it has been identified as a polynomial case in \citet{AnchRobPSPHal} by an equivalence with a problem on posets for which a min/max theorem is known \citep{schrijver}.

\medbreak
For (U-AnchRob) our main result is a characterization of the polytope through formulation (Dom). It relies on the polyhedral result of Theorem~\ref{thm:integralityUAnchRob}. Let $(\AllJobs, \prec)$ be a poset with $s$ (resp. $t$) a least (resp. greatest) element. Consider the inequalities
\begin{center}
\def\arraystretch{1}
\begin{tabular}{ l@{\hspace{0.5cm}}l @{\hspace{0.5cm}} c}
     $z_i - z_s \geq 0$ & $\forall i \in \Jobs$ & \ineqsi \\
    $z_t - z_i \geq 0$ & $\forall i \in \Jobs$ & \ineqit \\
    $z_t \leq M$ && \ineqleqM \\
    $z_j - z_i \geq h_j$ & $\forall i,j \in \Jobs$, $i \prec j$ & \ineqij  \\
     $z_j \geq 0$ & $\forall j \in \AllJobs$ & \ineqznonneg\\
     $h_j \leq 1$ & $\forall j \in \Jobs$ & \ineqhun \\
     $h_j \geq 0$ & $\forall j \in \Jobs$ & \ineqhnonneg \\
\end{tabular}
\end{center}
and let $\Prond = \{ (z,h) \in \Real^{\AllJobs}\times \Real^J:\ \text{(a)} - \text{(g)}\}$.

\begin{theorem}
\label{thm:integralityUAnchRob}
The polytope $\Prond$ is integer.
\end{theorem}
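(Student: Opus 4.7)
The plan is to prove integrality of $\Prond$ via an integer-invertible change of variables that exposes a totally unimodular coefficient matrix. For each $j \in \Jobs$ I will set $y_j := z_j - h_j$, keeping the $z$-variables unchanged. The map $(z,h) \mapsto (z,y)$ is an integer-affine bijection on $\Real^{\AllJobs} \times \Real^{\Jobs}$, with inverse $h_j = z_j - y_j$, so it preserves integrality of vertices; it therefore suffices to show that the image of $\Prond$ in the $(z,y)$-variables is an integer polytope.

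Next, I would rewrite each inequality (a)--(g) in the new variables: (a)--(c) and (e) are unaffected since they do not involve $h$; inequality (d) $z_j - z_i \geq h_j$ becomes $y_j \geq z_i$; the bound $h_j \leq 1$ becomes $z_j - y_j \leq 1$; and $h_j \geq 0$ becomes $y_j \leq z_j$. Every inequality of the resulting system involves at most two variables, with coefficients in $\{-1,+1\}$, and whenever two nonzero entries appear in a single row they have opposite sign. This is exactly the form of a difference-and-bound constraint system, whose coefficient matrix is totally unimodular --- either by the Heller--Tompkins criterion applied with a single class containing all columns, or equivalently because it is (up to a ground node for the single-variable bounds) the incidence matrix of a directed graph on the variable set. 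Since all right-hand sides are integer (they lie in $\{-1,0,1,M\}$ with $M$ integer by the U-AnchRob assumption), the image polytope has integer vertices, and by the first step so does $\Prond$.

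The main obstacle is recognizing the hidden unimodular structure. In the original $(z,h)$-variables, the key inequality (d) reads $-z_i + z_j - h_j \geq 0$, which has three nonzero coefficients per row and cannot be read directly as a difference constraint. The substitution $y_j = z_j - h_j$ is exactly what decouples this row, splitting (d) into the two genuine two-variable inequalities $z_i \leq y_j$ and $y_j \leq z_j$ and turning the box constraint on $h_j$ into the two-variable bound $z_j - y_j \leq 1$. Once this substitution is in hand the conclusion follows by a direct appeal to classical total unimodularity; without it, arguing integrality directly from the $(z,h)$-inequalities (by vertex perturbation, or by lifting through LP duality the min/max theorem on posets used in \citet{AnchRobPSPHal} from unit to general anchoring weights) appears considerably more delicate.
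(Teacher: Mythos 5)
Your proof is correct and is essentially the paper's own argument: your substitution $y_j = z_j - h_j$ is, up to the shift $y_j = z'_j - 1$, exactly the auxiliary variable $z'_j = 1 + z_j - h_j$ that the paper introduces, and both proofs conclude via total unimodularity of the resulting two-variable difference-constraint system together with the integrality of the affine change of variables. The only divergence is that the paper writes its auxiliary system over arcs of $G$ only and must then prove (its Claim~1) that the inequalities over all comparable pairs are implied, whereas you keep one row per pair $i \prec j$ --- which is harmless for total unimodularity and lets you skip that projection argument.
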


\begin{proof}
Consider the polytope $\Prond$ formed with all pairs $(z,h) \in \RealAllJp \times \RealJp$ satisfying the constraints \ineqsi--\ineqhnonneg.
To prove integrality of $\Prond$, the main idea is to define an auxiliary extended polyhedron where $h$ variables can be expressed linearly from $z$ variables and additional $z'$ variables.
Let us define the auxiliary polyhedron $\Prond'$ formed with all triplets $(z,z',h) \in \RealAllJp \times \RealAllJp \times \Real^{\Jobs}$ satisfying the constraints
\begin{center}
\def\arraystretch{1}
\begin{tabular}{ l@{\hspace{0.5cm}} l @{\hspace{0.5cm}}c}
     $z_j - z_i \geq 0$ & $\forallarcij$ & \ineqAuxHoriz  \\
     $z'_j - z'_i \geq 0$ & $\forallarcij$ & \ineqAuxHorizBas \\
     $z'_j - z_i \geq 1$ & $\forallarcij$ & \ineqAuxTrans \\
     $z_j - z'_j \geq -1$ & $\forall j \in \Jobs$ & \ineqAuxVertMinus \\
     $z_j - z'_j \leq 0$ & $\forall j \in \Jobs$ & \ineqAuxVert \\
     $z_t \leq M$ && \ineqAuxleqM \\     
     $h_j = 1 + z_j - z'_j$ & $\forall j \in \Jobs$ & \eqAuxH \\
\end{tabular}
\end{center}
Let us prove the following claim. \emph{Claim 1. $\Prond = \Projzh(\Prond')$.}\\ First, given $(z,h) \in \Prond$, let us prove the existence of $z'$ such that $(z,z',h) \in \Prond'$. Define $z'_j = 1 + z_j - h_j$ for every $j\in \Jobs$, and $z'_s = z_s$, $z'_t = 1 + z_t$. Then the (in)equalities \ineqAuxHoriz\ to \eqAuxH\ can be checked for the triplet $(z,z',h)$ as follows.\\
-- \ineqAuxHoriz\ and \ineqAuxleqM\ hold by assumption on $z$,
and \eqAuxH\ by definition of $z'$;\\
-- \ineqAuxHorizBas: if $j \not= t$ then $z'_j - z'_i = z_j - z_i - h_j + h_i \geq h_i \geq 0$ by \ineqij; if $j=t$ then $z'_j - z'_i = z_t - z_i + h_i \geq h_i \geq 0$ by \ineqit;\\
-- \ineqAuxTrans: if $j \not= t$ then $z'_j - z_i = 1 + z_j - h_j - z_i \geq 1$ by \ineqij; if $j=t$, follows from \ineqit;\\
-- \ineqAuxVertMinus\ and \ineqAuxVert: $z_j - z'_j = -1 + h_j \in [-1, 0]$ since $h_j \in [0,1]$. \\
Conversely, let $(z,z',h) \in \Prond'$. Let us check inequalities \ineqsi--\ineqhnonneg\ for $(z,h)$.\\
-- \ineqsi\ holds by sum of \ineqAuxHoriz\ along an \odpath{s}{i}path;\\
-- \ineqit\ holds by sum of \ineqAuxHoriz\ along an \odpath{i}{t}path; \\
-- \ineqleqM\ is clear;\\
-- \ineqij: for $i \prec j$ we have $z_j - z_i = z_j - z'_j + z'_j - z_k + z_k - z_i$, where $k$ is the last vertex distinct from $j$ on a path from $i$ to $j$ in the precedence graph (possibly $k=i$). Then $(k,j) \in \Arcset$ so $z'_j - z_k \geq 1$ by \ineqAuxTrans. Also $z_k - z_i \geq 0$ by summing \ineqAuxHoriz\ along the \odpath{i}{k} path. Hence $z_j - z_i \geq z_j - z'_j + 1 + 0 = h_j$ by \eqAuxH. Hence \ineqij\ is satisfied; \\
-- \ineqhun\ (resp. \ineqhnonneg) comes from \eqAuxH\ and \ineqAuxVertMinus\ (resp. \eqAuxH\ and \ineqAuxVert).\\
This completes the proof of Claim 1. $\diamond$

Now we prove: \emph{Claim 2. $\Prond'$ is integer.}\\
 Let $(z,z',h)$ be an extreme point of $\Prond'$. It satisfies the $n$ equalities \eqAuxH\, and it saturates $2(n+2)$ linearly independent inequalities among \ineqAuxHoriz--\ineqAuxleqM. Thus $(z,z')$ is an extreme point of $ \{ (z,z') \in \RealAllJp \times \RealAllJp:\ \text{\ineqAuxHoriz-\ineqAuxleqM} \}$. The constraint matrix of \ineqAuxHoriz--\ineqAuxleqM\ is totally unimodular, and the right-hand side is integer since $M$ is integer. Hence $(z,z')$ is integer and so is $(z, z',h)$. This ends the proof of Claim 2. $\diamond$

By Claim 1, $\Prond = \Projzh(\Prond')$. It holds that
for any extreme point $(z,h)$ of $\Prond$ there exists $z'$ such that $(z,z',h)$ is an extreme point of $\Prond'$. By Claim 2, such $(z,z',h)$ is integer, hence $(z,h)$ is integer. This proves the integrality of $\Prond$. \hfill $\square$
\end{proof}

Theorem~\ref{thm:integralityUAnchRob} implies the following polyhedral characterization

\begin{proposition}
\label{prop:caracUA}
Formulation (Dom) yields a polyhedral characterization for (U-AnchRob).
\end{proposition}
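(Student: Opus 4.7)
My plan is to show that the continuous relaxation $\PolyDom$ of formulation (Dom) for (U-AnchRob) coincides exactly with the polytope $\Prond$ introduced in Theorem~\ref{thm:integralityUAnchRob}. Once this identification is established, the desired polyhedral characterization follows immediately: by Theorem~\ref{thm:integralityUAnchRob}, $\Prond$ is integer, so $\Projh(\PolyDom) = \Projh(\Prond)$ is integer as well. Combining this with the validity of (Dom) (Proposition~\ref{prop:MIPzh}), one gets $\Projh(\PolyDom) = \conv(\Projh(\PolyDom) \cap \BinJobs) = \conv(\Hrond) = \Qrond$.

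The first step is to compute the coefficients $\Lz_{ij}$ and $\LD_{ij}$ under the U-AnchRob assumptions ($p = 0$, $\dhat_0 = 1$, $\Gamma = 1$). Since $p = 0$, every $\Lz_{ij}$ vanishes. For $\LD_{ij}$ with $i,j \in \AllJobs$, $i \prec j$, placing the unit deviation on $i$ whenever $i \in \Jobs$ yields $\LD_{ij} = 1$; for $i = s$, one has $\LD_{sj} = 1$ iff $j$ admits at least one predecessor in $\Jobs$, and $\LD_{sj} = 0$ otherwise. Also, using the convention $h_s = 1$ and $h_t = 0$ fixed, the constraints of (Dom) simplify: inequalities \myeqrefdom\ for $j = t$ reduce to $z_t \geq z_i$, those for $i = s$ become $z_j \geq \LD_{sj} h_j$, and those for $i,j \in \Jobs$, $i \prec j$ become $z_j - z_i \geq h_j$.

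The core step is verifying the equality $\PolyDom = \Prond$. The inclusion $\PolyDom \supseteq \Prond$ requires that every inequality of (Dom) be implied by the constraints \ineqsi--\ineqhnonneg. The only non-trivial case is (Dom)'s inequality $z_j - z_s \geq \LD_{sj} h_j$ when $\LD_{sj} = 1$: here one picks a predecessor $k \in \Jobs$ with $k \prec j$ and combines \ineqsi\ for $k$ with \ineqij\ for $(k,j)$ to get $z_j - z_s \geq h_j$; the arc constraints $z_j - z_i \geq 0$ and $z_t - z_i \geq 0$ are recovered from \ineqsi, \ineqit\ and \ineqij. The reverse inclusion $\PolyDom \subseteq \Prond$ is direct: \ineqsi\ follows from $z_i - z_s \geq \LD_{si} h_i \geq 0$, \ineqit\ from $z_t - z_i \geq \LD_{it} h_t = 0$, and \ineqij\ is exactly \myeqrefdom\ specialized to $i,j \in \Jobs$ since $\LD_{ij} = 1$ in that case.

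The main routine verification is the identification of the two polytopes, specifically handling the constraints involving the dummy nodes $s$ and $t$, which I expect to be the only place where care is needed. Once $\PolyDom = \Prond$ is established, Theorem~\ref{thm:integralityUAnchRob} and Proposition~\ref{prop:MIPzh} are invoked to conclude $\Projh(\PolyDom) = \Qrond$, which is precisely the statement that (Dom) yields a polyhedral characterization of (U-AnchRob).
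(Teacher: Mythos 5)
Your proposal is correct and follows essentially the same route as the paper: compute the coefficients $\Lz_{ij}$ and $\LD_{ij}$ for the unitary case, identify the continuous relaxation $\PolyDom$ with the polytope $\Prond$ (handling the only non-trivial inequality $z_j - z_s \geq h_j$ via a predecessor $k \in \Jobs$ of $j$), and then invoke the integrality of $\Prond$ from Theorem~\ref{thm:integralityUAnchRob} together with the validity of (Dom) to conclude $\Projh(\PolyDom) = \Qrond$. Your verification of the two inclusions is in fact slightly more explicit than the paper's, which simply matches the inequalities one by one.
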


\begin{proof}
Let us prove that the linear relaxation of (Dom) correponds to the polytope $\Prond$ from Theorem~\ref{thm:integralityUAnchRob}.
For (U-AnchRob) for every $i,j \in \AllJobs$ such that $i \prec j$ we have $\Lz_{ij} = 0$ since $p=0$. The worst-case longest paths values are as follows. For every $j\in \Jobs$, $\LD_{sj} = 0$ if $j$ has no predecessor except $s$, and $\LD_{sj} = 1$ otherwise. For every $i \prec j$ with $i \not= s$, $\LDij = 1$.
For a pair $i \prec j$ with $i \not= s$, inequality \myeqrefdom\ thus writes $z_j - z_i \geq h_j$, which is inequality \ineqij\ or \ineqit\ from the definition of $\Prond$.
For a pair $s \prec j$, inequality \myeqrefdom\ writes $z_j - z_s \geq 0$ if $j$ has no predecessor in $J$, and $z_j - z_s \geq h_j$ otherwise. If $j$ has no predecessor in $J$, it is inequality \ineqsi\ from the definition of $\Prond$. Otherwise $j$ has a predecessor $k \in J$ and inequality $z_j - z_s \geq h_j$ is dominated by inequalities $z_k - z_s \geq 0$ and $z_j - z_k \geq h_k$. Hence it is satisfied by any element of $\Prond$. Thus $\PolyDom = \Prond$.

By Theorem~\ref{thm:integralityUAnchRob}, $\Prond$ is integer, hence $\Projh(\Prond)$ is integer. Namely, each extreme point of $\Projh(\Prond)$ is the incidence vector of an anchored set. Thus $\Projh(\PolyDom) = \Projh(\Prond) = \Qrond$, and (Dom) yields a polyhedral characterization for the problem. \hfill $\square$
\end{proof}

Note that Proposition~\ref{prop:caracUA} implies also the integrality of schedule variables $z$, which are not required to be integer in general.

\medbreak
As a corollary, a complexity result is that Theorem~\ref{thm:integralityUAnchRob} generalizes the polynomial case of (U-AnchRob) proven in \citet{AnchRobPSPHal} for unit anchoring weights, to any non-negative anchoring weights.

\medbreak
The polyhedral characterization from (Dom) can be projected out to obtain a complete description of $\Qrond$ using $h$ variables only. Namely, the inequalities from the projection stated in Proposition~\ref{prop:projections} are $\sum_{i \in C} h_i \leq M$ for every chain $C$ of the subposet $(J^*, \prec)$, where $J^*$ is the set of jobs with at least a predecessor different from $s$. It is a family of inequalities with exponential size.
In that regard, (Dom) is a compact extended formulation with additional variables $z$: with $O(n)$ continuous additional variables, it is possible to describe the polytope $\Qrond$ with a polynomial $O(n^2)$ number of inequalities.

\subsubsection{Critical precedence graphs}
\label{sec:critical}

Let us now consider processing times $p \in \RealJp$. Recall that the precedence graph $G(p)$ is \emph{critical} if the length of all \stpath paths is the same.
A case where $G(p)$ is critical is when processing times are equal to zero, since the length of all \stpath paths is zero. For non-zero processing times, it can also be proven that for some precedence graphs, if $G(p)$ is quasi-critical then it is critical. This holds for series-parallel precedence graphs; the result is proven in \ref{app:sp} and used for numerical results in Section~\ref{sec:num} to generate critical precedence graphs.
\medbreak
Let us now consider a critical precedence graph $G(p)$, and extend the polyhedral characterization result obtained in Section~\ref{sec:UAnchRob}.
It is assumed that $M = \Lz_{st} + \dhat_0 M'$ with $M'$ integer. Indeed $M$ can be tightened to $\Lz_{st} + \dhat_0 \left\lfloor {(M - \Lz_{st})}/{\dhat_0} \right\rfloor$ w.l.o.g. Under this assumption we prove that

\begin{theorem}
\label{thm:characCritical}
For 1-disruption uncertainty and critical precedence graph, formulation (Dom) yields a polyhedral characterization of (AnchRob).
\end{theorem}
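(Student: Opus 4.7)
The plan is to reduce the critical case to the Unitary Anchor-Robust problem treated in Section~\ref{sec:UAnchRob}, via an affine change of variables. The crucial enabling observation is that in a critical precedence graph, for every $i \prec j$ all $i$-$j$ paths have the same length $\Lz_{ij}$: indeed any two such paths can be extended to $s$-$t$ paths with a common $s$-$i$ subpath of length $\Lz_{si}$ and $j$-$t$ subpath of length $\Lz_{jt}$, and both extensions must have length $\Lz_{st}$, forcing the original paths to agree. This gives the identity $\Lz_{sj} = \Lz_{si} + \Lz_{ij}$ for all $i \prec j$, and for 1-disruption uncertainty it implies $\LD_{ij} - \Lz_{ij} \in \{0, \dhat_0\}$, with value $\dhat_0$ precisely when there exists $k \in J$ with $i \preceq k \prec j$. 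The value $0$ only occurs for pairs $i=s$ and $j$ a root job whose only predecessor in $\AllJobs$ is $s$.

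I would then introduce the affine change of variables $\tilde z_i = (z_i - \Lz_{si})/\dhat_0$ for every $i \in \AllJobs$. Using the identity above and the hypothesis $M = \Lz_{st} + \dhat_0 M'$, one checks that the (Dom) precedence-type constraint $z_j - z_i \geq \Lz_{ij} + (\LD_{ij} - \Lz_{ij})h_j$ translates to $\tilde z_j - \tilde z_i \geq h_j$ when $\LD_{ij} - \Lz_{ij} = \dhat_0$ and to $\tilde z_j - \tilde z_i \geq 0$ otherwise, while the deadline $z_t \leq M$ becomes $\tilde z_t \leq M'$, and the nonnegativity $z_j \geq 0$ is absorbed into $\tilde z_s \geq 0$ together with the transformed precedence inequalities. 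A careful case analysis at $i=s$, $j=t$, and root jobs shows that these are exactly the (Dom) constraints of the Unitary Anchor-Robust instance on the underlying graph $G$ with zero processing times and integer deadline $M'$. The map $(z,h)\mapsto(\tilde z,h)$ is therefore an affine bijection between the (Dom) polyhedron for the critical instance and the (Dom) polyhedron for the associated Unitary instance, and it preserves the $h$-coordinates.

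The same transformation applied to the characterization of anchored sets from Proposition~\ref{prop:caracAnchored}, namely the inequalities $x_j - x_i \geq \LD_{ij}$ for $i \in H \cup \{s\}$, $j \in H$, $i \prec j$, turns them into $\tilde x_j - \tilde x_i \geq 1$ (when nontrivial) or into trivial schedule inequalities, which is exactly the characterization of anchored sets for the associated Unitary instance. Therefore $\Qrond$ is the same polytope for the critical instance and for the Unitary instance. Combining this with the bijection above and with Proposition~\ref{prop:caracUA} gives $\Projh(\PolyDom)=\Qrond$ for the critical instance, which is the desired polyhedral characterization.

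The main obstacle will be the meticulous case analysis needed to verify that, after the change of variables, every (Dom) constraint of the critical instance really matches a (Dom) constraint of the Unitary instance; in particular, that the dichotomy $\LD_{ij} - \Lz_{ij} \in \{0, \dhat_0\}$ mirrors exactly the U-AnchRob indicator ``there exists $k \in J$ with $i \preceq k \prec j$'', and that $\tilde z \geq 0$ is implied by the other constraints rather than separately imposed. The integrality assumption $M' \in \mathbb{Z}$ is only used at the very last step, to guarantee that the target instance is a genuine Unitary instance so that Proposition~\ref{prop:caracUA} applies.
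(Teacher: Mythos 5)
Your proposal is correct and follows essentially the same route as the paper: the affine rescaling $\tilde z_i = (z_i - \Lz_{si})/\dhat_0$ mapping the (Dom) polytope of the critical instance onto the (Dom) polytope of a (U-AnchRob) instance with integer deadline $M'$, followed by an appeal to Proposition~\ref{prop:caracUA}. Your explicit handling of the degenerate pairs $(s,j)$ with $\LD_{sj} = \Lz_{sj}$ (root jobs) is a point the paper's proof glosses over, but it does not change the argument.
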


\begin{proof}
Let $\mathcal{I}$ denote an instance of (AnchRob) for 1-disruption uncertainty with deviation $\dhat_0$, critical precedence graph $G(p)$, and deadline $M$.
Let $\zref$ be the earliest schedule of $G(p)$, namely $\zref_i = \Lz_{si}$ for every $i \in \AllJobs$.
Since $G(p)$ is critical, for every $i \prec j$ it holds that  
$\Lz_{si} + \Lz_{ij} + \Lz_{jt} = \Lz_{sj} + \Lz_{jt}$, hence $\Lzij = \zref_j - \zref_i$. Also $\LDij = \zref_j - \zref_i + \dhat_0$.
Inequalities \myeqrefdom\ then write
$z_j - z_i \geq (\zref_j - \zref_i) + \dhat_0 h_j$, or equivalently
$\frac{z_j - \zref_j}{\dhat_0} - \frac{z_i - \zref_i}{\dhat_0} \geq h_j.$
Similarly the deadline constraint is equivalent to
$\frac{z_t - \zref_t}{\dhat_0} \leq \frac{M - \zref_t}{\dhat_0}.$
Consider a new instance $\mathcal{I}'$ of (U-AnchRob) defined by: the precedence graph $G$, zero processing times, and $M' = \frac{M - \Lz_{st}}{\dhat_0}$. Then $M'$ is integer by assumption on $M$.
Solution $(z,h)$ is feasible for (Dom) in instance $\mathcal{I}$ if and only if solution $(\frac{z - \zref}{\dhat_0}, h)$ is feasible for (Dom) in instance $\mathcal{I}'$.
Hence if $(z,h)$ is an extreme point of $\PolyDom$ then $(\frac{z - \zref}{\dhat_0}, h)$ is extreme for the (U-AnchRob) instance. By Proposition~\ref{prop:caracUA}, vector $h$ is integer. Hence the claimed result.  \hfill $\square$
\end{proof}
Theorem~\ref{thm:characCritical} thus yields a polyhedral characterization of polynomial size for this special case. We mention that this is a polynomial case of (AnchRob) that was not identified in \citet{AnchRobPSPHal}.

\section{Numerical results}
\label{sec:num}

We investigate the impact of theoretical polyhedral results from Section~\ref{sec:polyhedralCharac} on the performance of formulations (Lay), (Std), (Dom) for various instance classes.
In Section~\ref{sec:num:instances}, instances and settings are presented.
Section~\ref{sec:num:budget} is dedicated to budgeted uncertainty, and Section~\ref{sec:num:others} to partition-budgeted and mixed-budgeted uncertainty sets.

\subsection{Instances and settings}
\label{sec:num:instances}

Instances are randomly generated as follows.
We consider instance classes with a four-field label \inst{F1\_F2\_F3\_F4}:

\begin{itemize}
\setlength\itemsep{0em}
    \item Field \inst{F1} concerns precedence graph $G$
    \begin{itemize}
        \item \inst{ER:} precedence graphs randomly generated according to Erdos-Renyi model, i.e., arc $(i,j)$ is in $G$ with probability $pr = 10/n$.
        \item \inst{SP:} Series-Parallel precedence graphs, inductively generated by drawing randomly series or parallel compositions.
    \end{itemize}
    
    \item Field \inst{F2} denotes processing times $p$
    \begin{itemize}
        \item \inst{pZero:} $p_i = 0$ for every $i \in J$;
        \item \inst{pRand:} $p_i$ is randomly generated in range $[5, 20]$;
        \item \inst{pQCri:} $p$ is obtained by applying the following procedure: start from the values generated for class \inst{pRand}; increase the processing time of a randomly selected job until every job is on a critical path. Hence $G(p)$ is quasi-critical.
    \end{itemize}
    
    \item Field \inst{F3} denotes deviation $\dhat$
    \begin{itemize}
        \item \inst{dRand:} for instances \inst{pRand} and \inst{pQCri}, $\dhat_i$ is randomly generated in $[1, \frac{1}{2} p_i]$ for every $i \in J$; for instances \inst{pZero}, $\dhat$ is equal to the values generated for instances \inst{pQCri};
        
        \item \inst{dUnif:} $\dhat_i = \dhat_0$ for every $i \in J$. Value $\dhat_0$ is randomly selected in the deviation values of instances \inst{dRand}.
    \end{itemize}
    
    \item Field \inst{F4} denotes the uncertainty set
    \begin{itemize}
        \item \inst{$\Gamma$1}, \inst{$\Gamma$2}, \inst{$\Gamma$3} correspond to budgeted uncertainty with deviation defined by \inst{F3} and $\Gamma=1,2,3$ respectively.
       
        \item \inst{Partition}: jobs are partitioned into two subsets $J^1$ and $J^2$, every job being in $J^1$ with probability 0.75. Given the deviation $\dhat$ defined by \inst{F3}, 
        the deviation vector of the \inst{Partition} instance is $\lfloor 0.1 \dhat_i \rfloor$ for every $i \in J^1$, and $\dhat_i$ for every $i \in J^2$. Budgets are $\Gamma^1 = 10$ and $\Gamma^2=1$.
        
        \item \inst{Mixed}: $\Delta = \Delta^1 \cup \Delta^2$ where $\Delta^1$ has deviation $\dhat$ defined by \inst{F3} and $\Gamma^1=1$, and $\Delta^2$ has deviation $\tau \dhat$, with $\tau=0.2$ and $\dhat$ defined by \inst{F3} and $\Gamma^2=10$.
        
    \end{itemize}
    
\end{itemize}

Each label corresponds to a class of 10 instances.
For completeness, a formal definition of series-parallel precedence graphs is given in \ref{app:sp}. It is also proven in the Appendix that instance classes \inst{SP\_pQCri} yield critical precedence graphs.

\medbreak
The number of jobs is set to $n=300$. Anchoring weights are unitary. 
Deadline $M$ is set to $M^{\frac{1}{2}} = \frac{1}{2}(L_{G(p)}(s,t) + L_{G(p+\dhat)}(s,t))$, that is, it is halfway between the min makespan of any schedule of $G(p)$ and the min makespan of a static-robust schedule.
Unreported results showed that choosing a deadline other than $M^{\frac{1}{2}}$ leads to similar results in terms of the comparison of formulations.
For budgeted uncertainty, the budget is $\Gamma \in \{1,2,3\}$.
The choice of a small uncertainty budget was previously motivated in the literature, see, e.g., \citep{AnchRobPSPHal,HerroelenStablePreschedule}.

\medbreak
 For each instance, formulations (Dom), (Std), and (Lay) have been implemented using Julia 0.6.2, JuMP 0.18.5. 
Mixed-integer programs are solved with CPLEX 12.8 on a PC under Windows 10 with Intel Core i7-7500U CPU 2.90GHz and 8 Go RAM.
The time limit is 300 seconds.
\medbreak
The valid inequalities \eqref{eq:chvatal} appear to be added by CPLEX on the fly.
Thus they are not hardcoded in formulations.

\subsection{Impact of instance parameters for budgeted uncertainty}
\label{sec:num:budget}

Let us first investigate the case of budgeted uncertainty.
Table~\ref{tab:ER} and Table~\ref{tab:SP} present the results for \inst{ER} and \inst{SP} instances respectively.
Each table presents results relative to 8 instance classes: the first 6 instance classes are with $\Gamma = 1$ and with all combinations of processing times and deviations, and the last 2 instance classes are with $\Gamma=2$ and $\Gamma=3$.
For each instance class, checkmarks in the first three columns indicate if the assumptions of Theorem~\ref{thm:characCritical} are matched:
\smallbreak
\def\arraystretch{1}
\begin{tabular}{l l}
-- crit.: & the precedence graph $G(p)$ is critical;\\
-- unif.: & deviation $\dhat$ is uniform;\\
-- \inst{$\Gamma$1}: & $\Gamma=1$.\\
\end{tabular}
\smallbreak
The tables feature:
\smallbreak
\begin{tabular}{l l}
-- opt: & average optimal value for instances solved optimally;\\
\multicolumn{2}{l}{-- for each formulation (Lay), (Std) and (Dom):}\\
\indent -- \#solved: & number of instances, out of 10, solved optimally within the time limit;\\
\indent -- gap: & average final gap of unsolved instances; \\
\indent -- time: & average computation time for solved instances in seconds;\\
\indent -- LPGap: & average gap $\frac{b - opt}{opt}$ between integer optimum $opt$ and linear bound $b$;\\
\indent -- CPXGap: & average gap obtained by CPLEX at root node.\\
\end{tabular}
\medbreak
The computation times do not include the preprocessing time for computing $\LDij$ values. The computation is done by a dynamic programming algorithm linear in $\Gamma |\Arcset|$. Its running time is negligible with respect to MIP computation time: on average 0.153 seconds for \inst{ER} instances and 0.161 seconds for \inst{SP} instances.

\bigbreak

Let us now comment on the impact of the instance parameters.

\medbreak
\emph{Polyhedral characterization cases.}
Instance classes
\inst{ER\_pZero\_dUnif\_$\Gamma$1}, \inst{SP} \inst{\_pZero\_dUnif\_$\Gamma$1}, and \inst{SP\_pQCri\_} \inst{dUnif\_$\Gamma$1}, correspond to polyhedral characterization cases. As expected, formulation (Dom) solves the problem in less than one second and LPGap = 0\%. Formulation (Lay) has non-zero LPGap, but CPLEX adds suitable cuts to close the gap at root node.
\medbreak
\emph{Impact of uniform deviation.}
Consider now the 6 instance classes \inst{F1\_F2\_dUnif}\inst{\_$\Gamma$1}. These are the first three row entries of Table~\ref{tab:ER} and the first three row entries of Table~\ref{tab:SP}.
On these 6 instance classes, formulations (Dom) and (Lay) still behave well. (Dom) solves all instances in less than one second. In particular, its LPGap is still very small: at most 0.42\%. (Lay) also performs well, the LPGap of (Lay) is larger, but CPXGap is comparable for (Dom) and (Lay).
We note that uniform deviation has an important impact on the performance of formulations. Consider, e.g., instance classes \inst{SP\_pZero\_dUnif\_$\Gamma$1} and \inst{SP\_pZero\_dRand\_$\Gamma$1}. For uniform deviation (Dom) is integer ; for non-uniform deviation it has $9.70\%$ CPXGap and solves only 4 instances out of 10 within the time limit.
By contrast with (Dom) and (Lay) on these instances, formulation (Std) performs very poorly and solves only 41 instances out of 120 (vs. 107 out of 120 for (Dom)).

\medbreak
\emph{Impact of the precedence graph.} The impact of the precedence graph being critical is limited, as shown for example by the comparison of instances \inst{ER\_pZero} and \inst{ER\_pQCri}. Both are efficiently solved, while the precedence graph is critical for the former, and not critical for the latter. Even more, instances with \inst{pRand} appear to be easy instances, while they do not have critical precedence graphs. An interpretation is that for such instances, a large number of jobs are not on critical paths and thus they can be anchored; note, e.g., that the optimal value is greater for \inst{pRand} instances than for others.
\medbreak
\emph{Impact of uncertainty budget.}
When $\Gamma$ is increased, the performance of (Lay) deteriorates. It gets even worse than (Std) for \inst{ER} instances, see \inst{ER\_pZero\_dUnif\_$\Gamma$3} where (Std) solves 3 instances and (Lay) solves 1 instance. Importantly, the size of formulation (Lay) increases with $\Gamma$. For (Dom) and (Std) only the values of the coefficients $\LDij$ depend on the budget, and not the size of the formulation.

\begin{table}[H]
\footnotesize
\hspace*{-1.4cm}
\begin{tabular}{l c c c l r c r r r r}

            instance & crit. & unif. & \inst{$\Gamma$1} & opt & & \#solved & gap & time(s) & LPGap & CPXGap \\
 \hline

\inst{ER\_pZero\_dUnif\_$\Gamma$1} & \checkmark & \checkmark & \checkmark & 271.90  &  (Lay) & 10 &  - &1 &  9.20\%  & 
 0.90\% \\
                            &            &            &            &         &  (Std) & 1 &  1.19\% & 144 &  5.14\%  & 
 3.07\% \\
                            &            &            &            &         &  (Dom) & 10 &  - &$<$1 & 0\%  & 
0\% \\
\hline
\inst{ER\_pQCri\_dUnif\_$\Gamma$1} &            & \checkmark & \checkmark & 274.30  &  (Lay) & 10 &  - &26 &  8.39\%  & 
 0.87\% \\
                            &            &            &            &         &  (Std) & 0 &  2.05\% & - &  8.04\%  & 
 4.29\% \\
                            &            &            &            &         &  (Dom) & 10 &  - &1 &  0.42\%  & 
 0.24\% \\
\hline
\inst{ER\_pRand\_dUnif\_$\Gamma$1} &            & \checkmark & \checkmark & 290.40  &  (Lay) & 10 &  - &$<$1 &  3.06\%  & 
 0.06\% \\
                            &            &            &            &         &  (Std) & 10 &  - &30 &  3.02\%  & 
 1.62\% \\
                            &            &            &            &         &  (Dom) & 10 &  - &$<$1 &  0.10\%  & 
0\% \\
\hline
\inst{ER\_pZero\_dRand\_$\Gamma$1} & \checkmark &            & \checkmark & 214.77  &  (Lay) & 9 &  2.84\% & 21 & 34.15\%  & 
 7.93\% \\
                            &            &            &            &         &  (Std) & 9 &  4.92\% & 65 & 28.53\%  & 
 4.68\% \\
                            &            &            &            &         &  (Dom) & 9 &  1.81\% & 11 & 21.13\%  & 
 3.28\% \\
\hline
\inst{ER\_pQCri\_dRand\_$\Gamma$1} &            &            & \checkmark & 225.80  &  (Lay) & 8 &  0.98\% & 21 & 28.54\%  & 
 7.70\% \\
                            &            &            &            &         &  (Std) & 6 &  2.30\% & 41 & 30.25\%  & 
 5.41\% \\
                            &            &            &            &         &  (Dom) & 10 &  - &31 & 18.09\%  & 
 3.69\% \\
\hline
\inst{ER\_pRand\_dRand\_$\Gamma$1} &            &            & \checkmark & 290.20  &  (Lay) & 10 &  - &$<$1 &  3.00\%  & 
 0.36\% \\
                            &            &            &            &         &  (Std) & 10 &  - &$<$1 &  3.02\%  & 
 1.74\% \\
                            &            &            &            &         &  (Dom) & 10 &  - &$<$1 &  0.82\%  & 
 0.07\% \\
\hline
\inst{ER\_pZero\_dUnif\_$\Gamma$2} & \checkmark & \checkmark &            & 255.50  &  (Lay) & 2 &  1.88\% & 113 & 15.11\%  & 
 9.93\% \\
                            &            &            &            &         &  (Std) & 1 &  4.20\% & 237 & 10.93\%  & 
 7.58\% \\
                            &            &            &            &         &  (Dom) & 10 &  - &53 &  3.91\%  & 
 2.46\% \\
\hline
\inst{ER\_pZero\_dUnif\_$\Gamma$3} & \checkmark & \checkmark &            & 243.50  &  (Lay) & 1 &  4.41\% & 111 & 19.77\%  & 
13.69\% \\
                            &            &            &            &         &  (Std) & 3 &  5.48\% & 153 & 15.76\%  & 
 7.78\% \\
                            &            &            &            &         &  (Dom) & 10 &  - &105 &  7.18\%  & 
 3.51\% \\
\hline
 \end{tabular}
\caption{\inst{ER} instances, budgeted uncertainty}
\label{tab:ER}
\end{table}

\begin{table}[H]
\footnotesize
\hspace*{-1.4cm}
\begin{tabular}{l c c c l r c r r r r}

            instance & crit. & unif. & \inst{$\Gamma$1} & opt & & \#solved & gap & time(s) & LPGap & CPXGap \\
 \hline

\inst{SP\_pZero\_dUnif\_$\Gamma$1} & \checkmark & \checkmark & \checkmark & 255.90  &  (Lay) & 10 &  - &$<$1 & 17.09\%  & 
0\% \\
                            &            &            &            &         &  (Std) & 0 &  3.34\% & - &  8.64\%  & 
 6.81\% \\
                            &            &            &            &         &  (Dom) & 10 &  - &$<$1 & 0\%  & 
0\% \\
\hline
\inst{SP\_pQCri\_dUnif\_$\Gamma$1} & \checkmark & \checkmark & \checkmark & 255.90  &  (Lay) & 10 &  - &$<$1 & 17.09\%  & 
0\% \\
                            &            &            &            &         &  (Std) & 0 &  6.88\% & - & 14.82\%  & 
 8.96\% \\
                            &            &            &            &         &  (Dom) & 10 &  - &$<$1 & 0\%  & 
0\% \\
\hline
\inst{SP\_pRand\_dUnif\_$\Gamma$1} &            & \checkmark & \checkmark & 262.80  &  (Lay) & 10 &  - &$<$1 & 14.19\%  & 
 0.42\% \\
                            &            &            &            &         &  (Std) & 0 &  6.86\% & - & 13.36\%  & 
 7.29\% \\
                            &            &            &            &         &  (Dom) & 10 &  - &$<$1 &  0.05\%  & 
0\% \\
\hline
\inst{SP\_pZero\_dRand\_$\Gamma$1} & \checkmark &            & \checkmark & 247.00  &  (Lay) & 4 &  3.72\% & 3 & 32.46\%  & 
16.43\% \\
                            &            &            &            &         &  (Std) & 2 & 18.60\% & 11 & 25.97\%  & 
20.07\% \\
                            &            &            &            &         &  (Dom) & 4 &  9.12\% & 15 & 16.42\%  & 
 9.70\% \\
\hline
\inst{SP\_pQCri\_dRand\_$\Gamma$1} & \checkmark &            & \checkmark & 247.00  &  (Lay) & 4 &  3.52\% & 3 & 32.38\%  & 
16.72\% \\
                            &            &            &            &         &  (Std) & 2 & 23.58\% & 30 & 35.21\%  & 
27.94\% \\
                            &            &            &            &         &  (Dom) & 4 &  9.83\% & 14 & 16.54\%  & 
 9.90\% \\
\hline
\inst{SP\_pRand\_dRand\_$\Gamma$1} &            &            & \checkmark & 268.60  &  (Lay) & 10 &  - &3 & 11.51\%  & 
 6.97\% \\
                            &            &            &            &         &  (Std) & 1 &  6.09\% & 193 & 11.05\%  & 
 9.34\% \\
                            &            &            &            &         &  (Dom) & 10 &  - &16 &  2.26\%  & 
 0.92\% \\
\hline
\inst{SP\_pQCri\_dUnif\_$\Gamma$2} & \checkmark & \checkmark &            & 246.66  &  (Lay) & 7 &  0.85\% & 45 & 22.65\%  & 
10.58\% \\
                            &            &            &            &         &  (Std) & 0 & 11.80\% & - & 20.92\%  & 
16.63\% \\
                            &            &            &            &         &  (Dom) & 9 &  0.77\% & 31 &  2.44\%  & 
 1.35\% \\
\hline
\inst{SP\_pQCri\_dUnif\_$\Gamma$3} & \checkmark & \checkmark &            & 246.14  &  (Lay) & 5 &  2.77\% & 57 & 26.40\%  & 
16.38\% \\
                            &            &            &            &         &  (Std) & 1 & 17.42\% & 215 & 25.47\%  & 
20.68\% \\
                            &            &            &            &         &  (Dom) & 7 &  2.19\% & 38 &  3.92\%  & 
 3.29\% \\
\hline
 \end{tabular}
\caption{\inst{SP} instances, budgeted uncertainty}
\label{tab:SP}
\end{table}

\subsection{Beyond budgeted uncertainty}
 \label{sec:num:others}
 
Let us now present results when $\Delta$ is an uncertainty set with several budgets.

\subsubsection{Partition-budgeted uncertainty set}

Table~\ref{tab:groupsER} and Table~\ref{tab:groupsSP} give computational results for \inst{ER} and \inst{SP} instances under \inst{Partition} uncertainty.
In this case, the $\LDij$ values were computed by a dynamic programming algorithm with complexity linear in $\Gamma^1 \Gamma^2 |\Arcset|$.
The computation was done in 4.364 seconds on average for \inst{ER} instances, and 4.916 seconds on average for \inst{SP} instances.

It comes that (Dom) solves all 60 \inst{ER} instances, and 54 \inst{SP} instances, which is better than for budgeted uncertainty. For \inst{SP} instances, the results are comparable to those of budgeted uncertainty. By contrast, \inst{ER} instances appear to be very easy for this uncertainty set. 
Hence solving the MIP formulation (Dom) for \inst{Partition} uncertainty does not seem harder than for budget \inst{$\Gamma$1}. This highlight that (Dom) can be readily used to handle several budget constraints.

\begin{table}[H]
\footnotesize
\hspace*{-1.4cm}
\begin{tabular}{l c c c l r c r r r r}

            instance & crit. & unif. & \inst{$\Gamma$1} & opt & & \#solved & gap & time(s) & LPGap & CPXGap \\
 \hline

\inst{ER\_pZero\_dUnif\_Partition} & \checkmark & \checkmark &            & 286.50  &  (Lay) & - & - & - & - & - \\
                                &            &            &            &         &  (Std) & 10 &  - &$<$1 &  2.17\%  & 
 0.07\% \\
                                &            &            &            &         &  (Dom) & 10 &  - &$<$1 & 0\%  & 
0\% \\
\hline
\inst{ER\_pQCri\_dUnif\_Partition} &            & \checkmark &            & 284.30  &  (Lay) & - & - & - & - & - \\
                                &            &            &            &         &  (Std) & 10 &  - &1 &  4.92\%  & 
 0.22\% \\
                                &            &            &            &         &  (Dom) & 10 &  - &$<$1 &  1.53\%  & 
0\% \\
\hline
\inst{ER\_pRand\_dUnif\_Partition} &            & \checkmark &            & 294.50  &  (Lay) & - & - & - & - & - \\
                                &            &            &            &         &  (Std) & 10 &  - &$<$1 &  1.72\%  & 
 0.05\% \\
                                &            &            &            &         &  (Dom) & 10 &  - &$<$1 &  0.34\%  & 
0\% \\
\hline
\inst{ER\_pZero\_dRand\_Partition} & \checkmark &            &            & 246.10  &  (Lay) & - & - & - & - & - \\
                                &            &            &            &         &  (Std) & 10 &  - &$<$1 & 22.03\%  & 
0\% \\
                                &            &            &            &         &  (Dom) & 10 &  - &$<$1 & 20.22\%  & 
0\% \\
\hline
\inst{ER\_pQCri\_dRand\_Partition} &            &            &            & 260.70  &  (Lay) & - & - & - & - & - \\
                                &            &            &            &         &  (Std) & 10 &  - &$<$1 & 14.45\%  & 
0\% \\
                                &            &            &            &         &  (Dom) & 10 &  - &$<$1 & 12.13\%  & 
0\% \\
\hline
\inst{ER\_pRand\_dRand\_Partition} &            &            &            & 293.10  &  (Lay) & - & - & - & - & - \\
                                &            &            &            &         &  (Std) & 10 &  - &$<$1 &  2.14\%  & 
0\% \\
                                &            &            &            &         &  (Dom) & 10 &  - &$<$1 &  0.77\%  & 
0\% \\
\hline
\end{tabular}
\caption{\inst{ER} instances, \inst{Partition} uncertainty}
\label{tab:groupsER}
\end{table}

\begin{table}[H]
\footnotesize
\hspace*{-1.4cm}
\begin{tabular}{l c c c l r c r r r r}

            instance & crit. & unif. & \inst{$\Gamma$1} & opt & & \#solved & gap & time(s) & LPGap & CPXGap \\
 \hline

\inst{SP\_pZero\_dUnif\_Partition} & \checkmark & \checkmark &            & 274.70  &  (Lay) & - & - & - & - & - \\
                                &            &            &            &         &  (Std) & 8 &  1.36\% & 14 &  4.48\%  & 
 2.01\% \\
                                &            &            &            &         &  (Dom) & 10 &  - &$<$1 & 0\%  & 
0\% \\
\hline
\inst{SP\_pQCri\_dUnif\_Partition} & \checkmark & \checkmark &            & 273.44  &  (Lay) & - & - & - & - & - \\
                                &            &            &            &         &  (Std) & 4 &  7.42\% & 43 & 10.57\%  & 
 8.29\% \\
                                &            &            &            &         &  (Dom) & 9 &  3.22\% & 14 &  1.43\%  & 
 0.84\% \\
\hline
\inst{SP\_pRand\_dUnif\_Partition} &            & \checkmark &            & 284.90  &  (Lay) & - & - & - & - & - \\
                                &            &            &            &         &  (Std) & 8 &  2.12\% & 57 &  4.89\%  & 
 2.71\% \\
                                &            &            &            &         &  (Dom) & 10 &  - &$<$1 &  0.17\%  & 
0\% \\
\hline
\inst{SP\_pZero\_dRand\_Partition} & \checkmark &            &            & 246.87  &  (Lay) & - & - & - & - & - \\
                                &            &            &            &         &  (Std) & 7 & 11.40\% & 1 & 23.12\%  & 
 4.54\% \\
                                &            &            &            &         &  (Dom) & 8 &  4.05\% & 33 & 19.43\%  & 
 2.79\% \\
\hline
\inst{SP\_pQCri\_dRand\_Partition} & \checkmark &            &            & 247.42  &  (Lay) & - & - & - & - & - \\
                                &            &            &            &         &  (Std) & 7 & 13.46\% & 1 & 27.36\%  & 
 5.16\% \\
                                &            &            &            &         &  (Dom) & 7 &  3.28\% & $<$1 & 19.33\%  & 
 2.66\% \\
\hline
\inst{SP\_pRand\_dRand\_Partition} &            &            &            & 279.10  &  (Lay) & - & - & - & - & - \\
                                &            &            &            &         &  (Std) & 7 &  4.89\% & 44 &  7.10\%  & 
 5.05\% \\
                                &            &            &            &         &  (Dom) & 10 &  - &1 &  1.85\%  & 
 0.41\% \\
\hline
 \end{tabular}
\caption{\inst{SP} instances, \inst{Partition} uncertainty}
\label{tab:groupsSP}
\end{table}

\subsubsection{Mixed-budgeted uncertainty sets}

Table~\ref{tab:unionER} and Table~\ref{tab:unionSP} give computational results for \inst{ER} and \inst{SP} instances under \inst{Mixed} uncertainty.
In this case $\Delta = \Delta^1 \cup \Delta^2$ where $\Delta^1$ is the uncertainty set corresponding to instance classes with fourth field \inst{$\Gamma$1}. The $\LDij$ values were precomputed by the same dynamic programming algorithm as for budgeted uncertainty. This computation was done in 0.152 seconds on average for \inst{ER} instances, and 0.154 seconds on average for \inst{SP} instances.

First, (Dom) solves optimally 59 \inst{ER} instances and 42 \inst{SP} instances for \inst{Mixed}, in comparison with 59 \inst{ER} instances and 48 \inst{SP} instances for budgeted uncertainty ($\Gamma=1$). That is, the performance of the formulation is not very sensitive to the change of uncertainty set.

Some conclusions given in Section~\ref{sec:num:budget} also hold for \inst{Mixed} uncertainty. Namely, instances with \inst{dUnif} are easier than instances with \inst{dRand}. It can be related to the influence of uniform deviation on the performance of (Dom), in connection with the polyhedral characterization result.

The optimal number of anchored jobs is often the same for $\Delta$ or $\Delta^1$. Namely, the value of opt can be compared between Table~\ref{tab:ER} and Table~\ref{tab:unionER} for \inst{ER} instances, and between Table~\ref{tab:SP} and Table~\ref{tab:unionSP} for \inst{SP} instances.
For example for instance classes \inst{SP\_pZero\_dRand\_$\Gamma$1} and \inst{SP\_pZero\_dRand\_Mixed} the average optimal value is equal to 247.00, hence all instances have the same optimal value for uncertainty set $\Delta$ and $\Delta^1$.
This means that the uncertainty set can be extended from $\Delta^1$ to $\Delta = \Delta^1 \cup \Delta^2$ without deteriorating the number of anchored jobs.

\begin{table}[H]
\footnotesize
\hspace*{-1.4cm}
\begin{tabular}{l c c c l r c r r r r}

            instance & crit. & unif. & \inst{$\Gamma$1} & opt & & \#solved & gap & time(s) & LPGap & CPXGap \\
 \hline

\inst{ER\_pZero\_dUnif\_Mixed} & \checkmark & \checkmark &            & 271.90  &  (Lay) & - & - & - & - & - \\
                               &            &            &            &         &  (Std) & 1 &  1.17\% & 286 &  5.14\%  & 
 3.06\% \\
                               &            &            &            &         &  (Dom) & 10 &  - &$<$1 & 0\%  & 
0\% \\
\hline
\inst{ER\_pQCri\_dUnif\_Mixed} &            & \checkmark &            & 274.20  &  (Lay) & - & - & - & - & - \\
                               &            &            &            &         &  (Std) & 0 &  2.13\% & - &  8.08\%  & 
 4.28\% \\
                               &            &            &            &         &  (Dom) & 10 &  - &1 &  0.45\%  & 
 0.24\% \\
\hline
\inst{ER\_pRand\_dUnif\_Mixed} &            & \checkmark &            & 290.40  &  (Lay) & - & - & - & - & - \\
                               &            &            &            &         &  (Std) & 10 &  - &26 &  3.02\%  & 
 1.62\% \\
                               &            &            &            &         &  (Dom) & 10 &  - &$<$1 &  0.10\%  & 
 0.03\% \\
\hline
\inst{ER\_pZero\_dRand\_Mixed} & \checkmark &            &            & 214.77  &  (Lay) & - & - & - & - & - \\
                               &            &            &            &         &  (Std) & 9 &  5.35\% & 66 & 28.53\%  & 
 4.61\% \\
                               &            &            &            &         &  (Dom) & 9 &  1.83\% & 13 & 21.13\%  & 
 3.22\% \\
\hline
\inst{ER\_pQCri\_dRand\_Mixed} &            &            &            & 225.80  &  (Lay) & - & - & - & - & - \\
                               &            &            &            &         &  (Std) & 7 &  2.61\% & 77 & 30.19\%  & 
 5.09\% \\
                               &            &            &            &         &  (Dom) & 10 &  - &22 & 18.09\%  & 
 3.49\% \\
\hline
\inst{ER\_pRand\_dRand\_Mixed} &            &            &            & 290.20  &  (Lay) & - & - & - & - & - \\
                               &            &            &            &         &  (Std) & 10 &  - &$<$1 &  3.02\%  & 
 1.78\% \\
                               &            &            &            &         &  (Dom) & 10 &  - &$<$1 &  0.82\%  & 
 0.07\% \\
\hline
 \end{tabular}

\caption{\inst{ER} instances, \inst{Mixed} uncertainty}
\label{tab:unionER}
\end{table}

\begin{table}[H]
\footnotesize
\hspace*{-1.4cm}
\begin{tabular}{l c c c l r c r r r r}

            instance & crit. & unif. & \inst{$\Gamma$1} & opt & & \#solved & gap & time(s) & LPGap & CPXGap \\
 \hline

\inst{SP\_pZero\_dUnif\_Mixed} & \checkmark & \checkmark &            & 255.70  &  (Lay) & - & - & - & - & - \\
                               &            &            &            &         &  (Std) & 0 &  3.60\% & - &  8.72\%  & 
 6.84\% \\
                               &            &            &            &         &  (Dom) & 10 &  - &1 &  0.06\%  & 
0\% \\
\hline
\inst{SP\_pQCri\_dUnif\_Mixed} & \checkmark & \checkmark &            & 246.75  &  (Lay) & - & - & - & - & - \\
                               &            &            &            &         &  (Std) & 2 & 24.10\% & 37 & 35.77\%  & 
24.48\% \\
                               &            &            &            &         &  (Dom) & 4 & 10.15\% & 13 & 16.59\%  & 
 9.99\% \\
\hline
\inst{SP\_pRand\_dUnif\_Mixed} &            & \checkmark &            & 262.80  &  (Lay) & - & - & - & - & - \\
                               &            &            &            &         &  (Std) & 0 &  6.74\% & - & 13.36\%  & 
 7.64\% \\
                               &            &            &            &         &  (Dom) & 10 &  - &1 &  0.05\%  & 
0\% \\
\hline
\inst{SP\_pZero\_dRand\_Mixed} & \checkmark &            &            & 247.00  &  (Lay) & - & - & - & - & - \\
                               &            &            &            &         &  (Std) & 2 & 19.25\% & 16 & 26.63\%  & 
19.47\% \\
                               &            &            &            &         &  (Dom) & 4 &  9.43\% & 13 & 16.40\%  & 
10.19\% \\
\hline
\inst{SP\_pQCri\_dRand\_Mixed} & \checkmark &            &            & 247.00  &  (Lay) & - & - & - & - & - \\
                               &            &            &            &         &  (Std) & 2 & 23.86\% & 30 & 35.52\%  & 
25.37\% \\
                               &            &            &            &         &  (Dom) & 4 &  9.15\% & 12 & 16.26\%  & 
10.19\% \\
\hline
\inst{SP\_pRand\_dRand\_Mixed} &            &            &            & 268.40  &  (Lay) & - & - & - & - & - \\
                               &            &            &            &         &  (Std) & 1 &  6.03\% & 179 & 11.00\%  & 
 9.48\% \\
                               &            &            &            &         &  (Dom) & 10 &  - &26 &  2.34\%  & 
 1.08\% \\
\hline
 \end{tabular}
\caption{\inst{SP} instances, \inst{Mixed} uncertainty}
\label{tab:unionSP}
\end{table}

\subsection{Conclusion on numerical experiments}

In the numerical experiments, we evaluated the performance of formulations for budgeted uncertainty sets, and uncertainty sets obtained by union or intersection of budgeted uncertainty sets.
For budgeted uncertainty sets, numerical tests showed that (Dom) outperforms the previously known formulation (Lay), that was dedicated to budgeted uncertainty. An advantage of formulation (Dom) over (Lay) is that the size of (Dom) is independent of the budget $\Gamma$, while (Lay) has $O(n \Gamma)$ variables.

We then investigated the impact of the parameters on the performance of (Dom). 
The influence of uniform deviation and small uncertainty budget is important, while the impact of critical precedence graphs is not significant on the efficiency of the formulation. 
Interestingly (Dom) is efficient for instances that are not matching the polyhedral characterization case, but where deviation $\dhat$ is uniform and $\Gamma$ is small. 

The proposed approach is to precompute the $\LDij$ values, then solve the obtained MIP formulation. This allowed us to solve the problem for a variety of uncertainty sets for which no linear formulation was previously investigated.
The precomputing time remains small (at most 5 seconds for partition-budgeted uncertainty sets) on the considered instances, and the MIP computation time for (Dom) is comparable to that under budgeted uncertainty.
While it was expected that (Dom) would outperform standard linearization (Std), the computational interest of applying the dominance is highlighted by the number of solved instances: on a total number of 400 instances, 358 are solved by (Dom), only 191 by (Std).

\section{Conclusion}

In the present work we investigated a versatile mixed-integer programming approach for the (AnchRob) problem. This led to a linear formulation that is applicable to any uncertainty set, provided that an algorithm for precomputing the worst-case longest paths values is available. This widens the range of uncertainty sets for which MIP formulations for (AnchRob) are known.
The keypoint for establishing a strong MIP formulation is the analysis of the combinatorial properties of (AnchRob), among which a dominance property.
This property allows for a characterization of the anchored sets polytope in interesting special cases.
The theoretical positive results for the dominance-based formulation also go together with good numerical performances around the polyhedral characterization case, for both budgeted uncertainty and uncertainty sets with several budgets.

An interesting research direction is to benefit from the obtained polyhedral results to solve large-scale instances. Another direction is to extend these results and tackle project scheduling under resource constraints. Resource constraints are indeed very salient in applications. It is worth investigating how anchored jobs may interfere with resource constraints, and see whether dominance may help again in the design of efficient linear formulations.
Another perspective is to identify other problems where an anchor-robust counterpart could be defined, and solved efficiently with mixed-integer programming. 

\appendix
\section{Series-parallel precedence graphs}
\label{app:sp}

Series-parallel digraphs are defined recursively as follows.
A digraph is \emph{series-parallel} with terminals $s$ and $t$ 
if one of the three assertions is satisfied:
\begin{itemize}
\item Its vertex-set is $\{s,t\}$ and its arc-set is $\{ (s,t) \}$;
\item (Series composition.) It is formed with two series-parallel digraphs $G_1$ and $G_2$, where terminals $t_1$ and $s_2$ have been identified;
\item (Parallel composition.) It is formed with two series-parallel digraphs $G_1$ and $G_2$, where the two pairs of terminals $s_1$ and $s_2$, and $t_1$ and $t_2$, have been identified. We assume that $G_1$ and $G_2$ have strictly more than $2$ vertices.
\end{itemize}
Series-parallel precedence graphs are series-parallel digraphs with terminals the dummy jobs $s$ and $t$.

\begin{proposition}

If $G$ is series-parallel and $G(p)$ is quasi-critical, then $G(p)$ is critical.
\end{proposition}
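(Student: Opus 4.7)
The natural approach is structural induction on the series-parallel construction. The base case, where $G$ is the single arc $(s,t)$, has only one $s$-$t$ path and is trivially critical. For the inductive step, assume the result holds for the two components $G_1, G_2$ of the composition, and consider the two cases separately.

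\textbf{Series case.} Let $v$ be the vertex obtained by identifying $t_1$ with $s_2$. For any job $i$ in $G_1$, I would use the fact that every $s$-$i$ path lies in $G_1$ and every $i$-$t$ path decomposes as an $i$-$v$ path in $G_1$ followed by a $v$-$t$ path in $G_2$, to obtain
\[
L_{si} + L_{it} = L^{(1)}_{s_1 i} + L^{(1)}_{i t_1} + L^{(2)}_{s_2 t_2}, \qquad L_{st} = L^{(1)}_{s_1 t_1} + L^{(2)}_{s_2 t_2}.
\]
Thus $i$ is quasi-critical in $G(p)$ iff it is quasi-critical in $G_1(p)$, and similarly for $j$ in $G_2$. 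Moreover $v$ itself is always quasi-critical by the two displayed identities. Hence quasi-criticality of $G(p)$ is equivalent to quasi-criticality of $G_1(p)$ and $G_2(p)$, so by induction both are critical, and since every $s$-$t$ path in $G$ concatenates an $s_1$-$t_1$ path with an $s_2$-$t_2$ path, $G(p)$ is critical.

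\textbf{Parallel case.} Here $L_{st} = \max(L^{(1)}_{s_1 t_1}, L^{(2)}_{s_2 t_2})$. The key observation, which I expect to be the main subtlety, is that any job $i$ in $G_1$ satisfies $L_{si} + L_{it} \leq L^{(1)}_{s_1 t_1}$ (paths through $i$ stay in $G_1$), and likewise for $G_2$. Quasi-criticality of $G(p)$ therefore forces $L^{(1)}_{s_1 t_1} \geq L^{(2)}_{s_2 t_2}$ from the existence of a quasi-critical job in $G_1$ (which exists as $G_1$ has strictly more than two vertices by the definition of parallel composition), and symmetrically the reverse inequality, yielding $L^{(1)}_{s_1 t_1} = L^{(2)}_{s_2 t_2} = L_{st}$. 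Then the quasi-critical condition for $i$ in $G_1$ rewrites as $L^{(1)}_{s_1 i} + L^{(1)}_{i t_1} = L^{(1)}_{s_1 t_1}$, so $G_1(p)$ is quasi-critical, and analogously for $G_2(p)$. The inductive hypothesis gives that $G_1(p)$ and $G_2(p)$ are critical, and since every $s$-$t$ path of $G$ is entirely in one of the $G_k$ with length $L^{(k)}_{s_k t_k} = L_{st}$, $G(p)$ is critical.

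\textbf{Main obstacle.} The delicate step is the parallel case, specifically the argument that quasi-criticality of the composed graph forces equality of the two component makespans. This uses the explicit assumption in the parallel composition rule that each $G_k$ has strictly more than two vertices, guaranteeing the existence of at least one non-terminal job in each component on which to apply the quasi-criticality condition. The series case and the base case are essentially bookkeeping.
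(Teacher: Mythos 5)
Your proof is correct and follows essentially the same route as the paper's: structural induction on the series-parallel decomposition, with the parallel composition handled by observing that quasi-criticality of a non-terminal job in each component forces the two component makespans to coincide. Your write-up is in fact slightly more careful than the paper's, since you explicitly verify that quasi-criticality of $G(p)$ passes down to $G_1(p)$ and $G_2(p)$ before invoking the inductive hypothesis, a step the paper leaves implicit.
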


\begin{proof}
If $G$ is a path, then $G(p)$ is critical. If $G$ is obtained by series composition of $G^1$ and $G^2$; then any \stpath path is formed by an \odpath{s^1}{t^1} path in $G^1$ and an \odpath{s^2}{t^2} path in $G^2$. Hence if $G^1$ and $G^2$ are critical, it follows that $G$ is critical. If $G$ is obtained by parallel composition of $G^1$ and $G^2$, both critical, let $i \in G^1$ and $j \in G^2$. Then every \stpath path going through $G^1$ (resp. $G^2$) has length $L_{G^1}(s,t) = L_{G^1}(s,i) + L_{G^1}(i,t)$ (resp. $L_{G^2}(s,t) = L_{G^2}(s,j) + L_{G^2}(j,t)$). If $G$ is quasi-critical, $L_G(s,i) + L_G(i,t) = L_G(s,j)+L_G(j,t)$, and it follows that all \stpath paths of $G$ have same length.  \hfill $\square$
\end{proof}

\bibliographystyle{plainnat}
\bibliography{anchor}

\begin{thebibliography}{20}
\providecommand{\natexlab}[1]{#1}
\providecommand{\url}[1]{\texttt{#1}}
\expandafter\ifx\csname urlstyle\endcsname\relax
  \providecommand{\doi}[1]{doi: #1}\else
  \providecommand{\doi}{doi: \begingroup \urlstyle{rm}\Url}\fi

\bibitem[Ayoub and Poss(2016)]{AyoubPossDecomposition}
Josette Ayoub and Michael Poss.
\newblock Decomposition for adjustable robust linear optimization subject to
  uncertainty polytope.
\newblock \emph{Computational Management Science}, 13\penalty0 (2):\penalty0
  219--239, 2016.
\newblock \doi{10.1007/s10287-016-0249-2}.

\bibitem[Ben{-}Tal et~al.(2004)Ben{-}Tal, Goryashko, Guslitzer, and
  Nemirovski]{BGGNAdjustable}
Aharon Ben{-}Tal, A.~P. Goryashko, E.~Guslitzer, and Arkadi Nemirovski.
\newblock Adjustable robust solutions of uncertain linear programs.
\newblock \emph{Mathematical Programming}, 99\penalty0 (2):\penalty0 351--376,
  2004.
\newblock \doi{10.1007/s10107-003-0454-y}.

\bibitem[Bendotti et~al.(2017)Bendotti, Chr{\'{e}}tienne, Fouilhoux, and
  Quilliot]{CPMEJOR}
Pascale Bendotti, Philippe Chr{\'{e}}tienne, Pierre Fouilhoux, and Alain
  Quilliot.
\newblock Anchored reactive and proactive solutions to the {C}{P}{M}-scheduling
  problem.
\newblock \emph{European Journal of Operational Research}, 261\penalty0
  (1):\penalty0 67--74, 2017.
\newblock \doi{10.1016/j.ejor.2017.02.007}.

\bibitem[Bendotti et~al.(2019)Bendotti, Chr{\'e}tienne, Fouilhoux, and
  Pass-Lanneau]{AnchRobPSPHal}
Pascale Bendotti, Philippe Chr{\'e}tienne, Pierre Fouilhoux, and Ad{\`e}le
  Pass-Lanneau.
\newblock {The Anchor-Robust Project Scheduling Problem}.
\newblock May 2019.
\newblock URL \url{https://hal.archives-ouvertes.fr/hal-02144834}.

\bibitem[Bendotti et~al.(2020)Bendotti, Chr\'etienne, Fouilhoux, and
  Pass-Lanneau]{ISCOAnchResched}
Pascale Bendotti, Philippe Chr\'etienne, Pierre Fouilhoux, and Ad\`ele
  Pass-Lanneau.
\newblock Anchored rescheduling problems under generalized precedence
  constraints.
\newblock In Mourad Ba\"iou, Bernard Gendron, Oktay G\"unl\"uk, and Ali~Ridha
  Mahjoub, editors, \emph{Combinatorial Optimization. ISCO 2020}, volume 12176
  of \emph{Lecture Notes in Computer Science}, 2020.
\newblock \doi{doi.org/10.1007/978-3-030-53262-8\_13}.

\bibitem[Bertsimas and Caramanis(2010)]{BertsimasCaramanisFiniteAdaptability}
Dimitris Bertsimas and Constantine Caramanis.
\newblock Finite adaptability in multistage linear optimization.
\newblock \emph{{IEEE} Transactions on Automatic Control}, 55\penalty0
  (12):\penalty0 2751--2766, 2010.
\newblock \doi{10.1109/TAC.2010.2049764}.

\bibitem[Bertsimas and Sim(2004)]{PriceBS}
Dimitris Bertsimas and Melvyn Sim.
\newblock The price of robustness.
\newblock \emph{Operations Research}, 52:\penalty0 35--53, 2004.
\newblock \doi{10.1287/opre.1030.0065}.

\bibitem[Billionnet et~al.(2014)Billionnet, Costa, and Poirion]{Costa}
Alain Billionnet, Marie{-}Christine Costa, and Pierre{-}Louis Poirion.
\newblock 2-stage robust {MILP} with continuous recourse variables.
\newblock \emph{Discrete Applied Mathematics}, 170:\penalty0 21--32, 2014.
\newblock \doi{10.1016/j.dam.2014.01.017}.

\bibitem[Buchheim and Kurtz(2018)]{BuchheimKurtzSurveyEJCO}
Christoph Buchheim and Jannis Kurtz.
\newblock Robust combinatorial optimization under convex and discrete cost
  uncertainty.
\newblock \emph{{EURO} Journal on Computational Optimization}, 6\penalty0
  (3):\penalty0 211--238, 2018.
\newblock \doi{10.1007/s13675-018-0103-0}.

\bibitem[D'Angelo et~al.(2011)D'Angelo, Di~Stefano, Navarra, and
  Pinotti]{RecoverableTimetables}
Gianlorenzo D'Angelo, Gabriele Di~Stefano, Alfredo Navarra, and Cristina
  Pinotti.
\newblock Recoverable robust timetables: An algorithmic approach on trees.
\newblock \emph{IEEE Transactions on Computers}, 60:\penalty0 433--446, 2011.
\newblock \doi{10.1109/TC.2010.142}.

\bibitem[Gabrel et~al.(2014)Gabrel, Murat, and Thiele]{surveyGabMurThi}
Virginie Gabrel, C{\'{e}}cile Murat, and Aur{\'{e}}lie Thiele.
\newblock Recent advances in robust optimization: An overview.
\newblock \emph{European Journal of Operational Research}, 235\penalty0
  (3):\penalty0 471--483, 2014.
\newblock \doi{10.1016/j.ejor.2013.09.036}.

\bibitem[Herroelen and Leus(2002)]{HerroelenLeusProjectSchedulingUncertainty}
Willy Herroelen and Roel Leus.
\newblock Project scheduling under uncertainty: Survey and research potentials.
\newblock \emph{European Journal of Operational Research}, 165:\penalty0
  289--306, 2002.
\newblock \doi{10.1016/j.ejor.2004.04.002}.

\bibitem[Herroelen and Leus(2004)]{HerroelenStablePreschedule}
Willy Herroelen and Roel Leus.
\newblock The construction of stable project baseline schedules.
\newblock \emph{European Journal of Operational Research}, 156\penalty0
  (3):\penalty0 550--565, 2004.
\newblock \doi{10.1016/S0377-2217(03)00130-9}.

\bibitem[Liebchen et~al.(2009)Liebchen, L{\"u}bbecke, M{\"o}hring, and
  Stiller]{recoverableLiebchen}
Christian Liebchen, Marco L{\"u}bbecke, Rolf M{\"o}hring, and Sebastian
  Stiller.
\newblock The concept of recoverable robustness, linear programming recovery,
  and railway applications.
\newblock \emph{Robust and Online Large-Scale Optimization}, 5868:\penalty0
  1--27, 2009.
\newblock \doi{10.1007/978-3-642-05465-5\_1}.

\bibitem[Minoux(2009)]{MinouxEncyclo}
Michel Minoux.
\newblock Robust linear programming with right-hand-side uncertainty, duality
  and applications.
\newblock In \emph{Encyclopedia of Optimization, Second Edition}, pages
  3317--3327. 2009.
\newblock \doi{10.1007/978-0-387-74759-0\_569}.

\bibitem[Pinedo(2002)]{Pinedo}
M.~Pinedo.
\newblock \emph{Scheduling: Theory, Algorithms, and Systems}.
\newblock Springer, Boston, MA, 4th edition, 2002.
\newblock \doi{10.1007/978-1-4614-2361-4}.

\bibitem[Schrijver(2003)]{schrijver}
Alexander Schrijver.
\newblock \emph{Combinatorial Optimization -- Polyhedra and Efficiency}.
\newblock Springer-Verlag Berlin Heidelberg, 2003.

\bibitem[{\c{S}}eref et~al.(2009){\c{S}}eref, Ahuja, and
  Orlin]{OrlinIncremental}
Onur {\c{S}}eref, Ravindra~K. Ahuja, and James~B. Orlin.
\newblock Incremental network optimization: Theory and algorithms.
\newblock \emph{Operations Research}, 57\penalty0 (3):\penalty0 586--594, 2009.
\newblock \doi{10.1287/opre.1080.0607}.

\bibitem[Soyster(1973)]{Soyster}
Allen~L. Soyster.
\newblock Technical note -- convex programming with set-inclusive constraints
  and applications to inexact linear programming.
\newblock \emph{Operations Research}, 21\penalty0 (5):\penalty0 1154--1157,
  1973.
\newblock \doi{10.1287/opre.21.5.1154}.

\bibitem[Zeng and Zhao(2013)]{ZZ}
Bo~Zeng and Long Zhao.
\newblock Solving two-stage robust optimization problems using a
  column-and-constraint generation method.
\newblock \emph{Operations Research Letters}, 41\penalty0 (5):\penalty0 457 --
  461, 2013.
\newblock ISSN 0167-6377.
\newblock \doi{https://doi.org/10.1016/j.orl.2013.05.003}.

\end{thebibliography}

\end{document}